\theoremstyle{plain}
\newtheorem{theorem}{Theorem}[section]
\newtheorem{lemma}[theorem]{Lemma}
\theoremstyle{definition}
\newtheorem{definition}[theorem]{Definition}
\newtheorem{example}[theorem]{Example}
\theoremstyle{remark}
\newtheorem{remark}[theorem]{Remark}
\numberwithin{equation}{section}
\title{Starlikeness associated with lemniscate of Bernoulli}
\author[V. Madaan]{Vibha Madaan}
\address{Vibha Madaan, Department of Mathematics, University of Delhi, Delhi--110 007, India}
\email{vibhamadaan47@gmail.com}
\author[A. Kumar]{Ajay Kumar$^\ast$}
\address{Ajay Kumar, Department of Mathematics, University of Delhi, Delhi--110 007, India}
\email{akumar@maths.du.ac.in}
\author[V. Ravichandran]{V. Ravichandran}
\address{V. Ravichandran, Department of Mathematics, National Institute of Technology, Tiruchirappalli--620015, India}
	\email{vravi68@gmail.com}
\begin{document}
	\thanks{$^\ast$Corresponding author, E-mail address: akumar@maths.du.ac.in}
	\keywords{Subordination; univalent functions; starlike functions; lemniscate of Bernoulli}
	\subjclass[2010]{30C45; 30C80}

	\begin{abstract}
		For an analytic function $f$ on the unit disk $\mathbb{D}=\{z:|z|<1\}$ satisfying $f(0)=0=f'(0)-1,$ we obtain sufficient conditions so that $f$ satisfies $|(zf'(z)/f(z))^2-1|<1.$ The technique of differential subordination of first or second order is used. The admissibility conditions for lemniscate of Bernoulli are derived and employed in order to prove the main results.
	
\end{abstract}
\maketitle

\section{Introduction}
The set of analytic functions $f$ on the unit disk $\mathbb{D}=\{z:|z|<1\}$ normalized as $f(0)=0$ and $f'(0)=1$ will be denoted by $\mathcal{A}$ and $\mathcal{S}$ be the subclass of $\mathcal{A}$ consisting of univalent functions.
A function $f\in\mathcal{SL}$ if $zf'(z)/f(z)$ lies in the region bounded by the right half of lemniscate of Bernoulli given by $\{w:|w^2-1|=1\}$ and such a function will be called \textit{lemniscate starlike}. Evidently, the functions in class $\mathcal{SL}$ are univalent and starlike i.e. $\operatorname{Re}(zf'(z)/f(z))>0$ in $\mathbb{D}.$ The set $\mathcal{H}[a,n]$ consists of analytic functions $f$ having Taylor series expansion of the form $f(z)=a+a_nz^n+a_{n+1}z^{n+1}+\ldots$ with $\mathcal{H}_1:=\mathcal{H}[1,1].$ For two analytic functions $f$ and $g$ on $\mathbb{D},$ the function $f$ is said to be \textit{subordinate} to the function $g,$ written as $f(z)\prec g(z)$ (or $f\prec g$), if there is a Schwarz function $w$ with $w(0)=0$ and $|w(z)|<1$ such that $f(z)=g(w(z)).$ If $g$ is a univalent function, then $f(z)\prec g(z)$ if and only if $f(0)=g(0)$ and $f(\mathbb{D})\subset g(\mathbb{D}).$ In terms of subordination, a function $f\in\mathcal{A}$ is   lemniscate starlike if $zf'(z)/f(z)\prec\sqrt{1+z}.$ The class $\mathcal{SL}$ was introduced by Sok\'{o}l and Stankiewicz \cite{MR1473947}.
	
The class $\mathcal{S}^*(\varphi)$ of \textit{Ma-Minda starlike functions} \cite{MR1343506}  is defined by
\[\mathcal{S}^*(\varphi):=\left\{f\in\mathcal{S}:\dfrac{zf'(z)}{f(z)}\prec\varphi(z)\right\}, \]	where $\varphi$ is analytic and univalent on $\mathbb{D}$ such that $\varphi(\mathbb{D})$ is starlike with respect to $\varphi(0)=1$ and is symmetric about the real axis with $\varphi'(0)>0.$ For particular choices of $\varphi,$ we have well known subclasses of starlike functions like for $\varphi(z):=\sqrt{1+z},\ \mathcal{S}^*(\varphi):=\mathcal{SL}.$ If $\varphi(z):=(1+Az)/(1+Bz),$ where $-1\leq B<A\leq1$, the class $\mathcal{S}^*[A,B]:=\mathcal{S}^*((1+Az)/(1+Bz))$ is called the class of \textit{Janowski starlike functions} \cite{MR0267103}. If for $0\leq\alpha<1,\ A=1-2\alpha$ and $B=-1,$ then we obtain $\mathcal{S}^*(\alpha):=\mathcal{S}^*[1-2\alpha,-1],$ the class of \textit{starlike functions of order $\alpha.$} The class $\mathcal{S}^*(\alpha)$ was introduced by Robertson \cite{MR0783568}. The class $\mathcal{S}^*:=\mathcal{S}^*(0)$ is simply the class of \textit{starlike functions}. If the function $\varphi_{PAR}:\mathbb{D}\to\mathbb{C}$ is given by
\[ \varphi_{PAR}(z):=1+\frac{2}{\pi^2}\left(\log\frac{1+\sqrt{z}}{1-\sqrt{z}}\right)^2,\ \operatorname{Im}\sqrt{z}\geq0  \]
then $\varphi_{PAR}(\mathbb{D}):=\{w=u+iv:v^2<2u-1\}=\{w:\operatorname{Re}w>|w-1|\}.$ Then the class $\mathcal{S}_P:=\mathcal{S}^*(\varphi_{PAR})$ of parabolic functions, introduced by R\o{}nning\cite{MR1128729}, consists of the functions $f\in\mathcal{A}$ satisfying
\[ \operatorname{Re}\left(\frac{zf'(z)}{f(z)}\right)>\left|\frac{zf'(z)}{f(z)}-1\right|,\ z\in\mathbb{D}.  \]
Sharma \emph{et. al} \cite{MR3536076} introduced the set $\mathcal{S}^*_C:=\mathcal{S}^*(1+4z/3+2z^2/3)$ which consists of functions $f\in\mathcal{A}$ such that $zf'(z)/f(z)$ lies in the region bounded by the cardioid
\[ \Omega_C:=\{w=u+iv:(9u^2+9v^2-18u+5)-16(9u^2+9v^2-6u+1)=0   \}.  \] The class $\mathcal{S}^*_e:=\mathcal{S}^*(e^z),$ introduced by Mendiratta \emph{et. al} \cite{MR3394060}, contains functions $f\in\mathcal{A}$ that satisfy $|\log (zf'(z)/f(z)|<1.$

For $b\geq1/2$ and $a\geq 1,$ Paprocki and Sok\'{o}l \cite{MR1473960} introduced a more general class $\mathcal{S}^*[a,b]$ for the functions $f\in\mathcal{A}$ satisfying $|(zf'(z)/f(z))^a-b|<b.$ Evidently, the class $\mathcal{SL}:=\mathcal{S}^*[2,1].$ Kanas \cite{MR2209585} used the method of differential subordination to find conditions for the functions to map the unit disk onto region bounded by parabolas and hyperbolas.
Ali \emph{et al.} \cite{MR2917253} studied the class $\mathcal{SL}$ with the help of differential subordination and obtained some lower bound on $\beta$ such that $p(z)\prec\sqrt{1+z}$ whenever $1+\beta zp'(z)/p^n(z)\prec\sqrt{1+z}\ (n=0,1,2),$ where $p$ is analytic on $\mathbb{D}$ with $p(0)=1.$ Kumar \emph{et al.} \cite{MR3063215} proved that whenever $\beta>0,\ p(z)+\beta zp'(z)/p^n(z)\prec\sqrt{1+z}\ (n=0,1,2)$ implies $p(z)\prec\sqrt{1+z}$ for $p$ as mentioned above.
	
Motivated by work in \cite{MR2917253,MR2209585,MR3496681,MR3063215,MR3394060,MR1128729,MR3536076,MR3518217}, the method of differential subordination of first and second order has been used to obtain sufficient conditions for the function $f\in\mathcal{A}$ to belong to class $\mathcal{SL}.$ Let $p$ be an analytic function in $\mathbb{D}$ with $p(0)=1.$ In Section \ref{fods}, using the first order differential subordination, conditions on complex number $\beta$ are determined so that $p(z)\prec\sqrt{1+z}$ whenever $p(z)+\beta zp'(z)/p^n(z)\prec\sqrt{1+z}\ (n=3,4)$ or whenever $p^2(z)+\beta zp'(z)/p^n(z)\prec 1+z\ (n=-1,0,1,2)$ and alike. Also, conditions on $\beta$ and $\gamma$ are obtained that enable $p^2(z)+zp'(z)/(\beta p(z)+\gamma)\prec 1+z$ imply $p(z)\prec\sqrt{1+z}.$
Section \ref{sods} deals with obtaining sufficient conditions on $\beta$ and $\gamma,$ using the method of differential subordination which implies $p(z)\prec\sqrt{1+z}$ if $\gamma zp'(z)+\beta z^2p''(z)\prec z/(8\sqrt{2})$ and others.
Section \ref{al} admits alternate proofs for the results proved in \cite{MR2917253} and \cite{MR3063215}. The proofs are based on properties of admissible functions formulated by Miller and Mocano \cite{MR1760285}.
	
	
\section{The admissibility condition}
 
Let $\mathcal{Q}$ be the set of functions $q$ that are analytic and injective on $\overline{\mathbb{D}} \setminus \mathbf{E}(q),$ where
		\[ \mathbf{E}(q)=\left\{ \zeta \in \partial \mathbb{D} :  \underset{z \rightarrow \zeta} \lim q(z)=\infty \right\}    \]
		and are such that $q'(\zeta) \neq 0$ for $\zeta \in \partial \mathbb{D}\setminus\mathbf{E}(q)$.
 
	\begin{definition}
		Let $\Omega$ be a set in $\mathbb{C} , q \in \mathcal{Q}$ and $n$ be a positive integer. The class of admissible functions $\Psi_n[\Omega,q],$ consists of those functions $\psi: \mathbb{C}^3 \times \mathbb{D} \to \mathbb{C}$ that satisfy the admissiblity condition $\psi(r,s,t;z)\not\in\Omega$ whenever $r=q(\zeta)$ is finite, $s =m\zeta q'(\zeta)$ and $\operatorname{Re} \left( \dfrac{t}{s}+1 \right) \geq m \operatorname{Re} \left( \dfrac{\zeta q''(\zeta)}{q'(\zeta)}+1 \right),$ for $z\in\mathbb{D},\zeta\in\partial\mathbb{D}\setminus E(q)$ and $m\geq n\geq 1.$  The class $\Psi_1[\Omega,q]$ will be denoted by $\Psi[\Omega,q].$
	\end{definition}

	\begin{theorem}\cite[Theorem 2.3b, p.\,28]{MR1760285}\label{main thm}
		Let $\psi\in\Psi_n[\Omega,q]$ with $q(0)=a.$ Thus for $p\in\mathcal{H}[a,n]$ such that \begin{equation}\label{adm basic}
		\psi(p(z),zp'(z),z^2p''(z);z)\in\Omega\ \ \Rightarrow \ \ p(z)\prec q(z).
		\end{equation}	
\end{theorem}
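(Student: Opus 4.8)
The plan is to argue by contradiction, reducing the statement to the classical Jack--Miller--Mocanu lemma on contact points. If $p$ is constant then $p\equiv q(0)=a$, so $p\prec q$ with Schwarz function $w\equiv0$; hence assume $p$ is nonconstant, and suppose $p\not\prec q$. Since $q$ is analytic and injective on $\mathbb{D}$, its derivative does not vanish there and $q^{-1}\colon q(\mathbb{D})\to\mathbb{D}$ is analytic. As $p(0)=a\in q(\mathbb{D})$, there is a smallest $r_0\in(0,1)$ with $p(\{|z|<r_0\})\subseteq q(\mathbb{D})$ while $p(\{|z|=r_0\})$ meets $\partial q(\mathbb{D})$; fix $z_0$, $|z_0|=r_0$, with $p(z_0)\in\partial q(\mathbb{D})$. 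The boundary correspondence for $q\in\mathcal{Q}$ produces $\zeta_0\in\partial\mathbb{D}\setminus\mathbf{E}(q)$ with $q(\zeta_0)=p(z_0)$, and since $q'(\zeta_0)\neq0$ the function $w(z):=q^{-1}(p(r_0z))$ is analytic on $\mathbb{D}$, extends analytically past $z_1:=z_0/r_0\in\partial\mathbb{D}$, and satisfies $w(0)=0$, $|w(z)|<1$ on $\mathbb{D}$, and $w(z_1)=\zeta_0$; from $p\in\mathcal{H}[a,n]$ and $q'(0)\neq0$ we also get $w\in\mathcal{H}[0,n]$.

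I would then apply Jack's lemma, in its second-order form, to $w$ at the boundary maximum point $z_1$. Vanishing of the first $\theta$-derivative of $|w(e^{i\theta})|$ at $\arg z_1$ shows $m:=z_1w'(z_1)/w(z_1)$ is real; the Hopf (radial) estimate, together with $w(z)/z^n$ being bounded by $1$ on $\mathbb{D}$, gives $m\ge n\ge1$; and nonpositivity of the second $\theta$-derivative gives $\operatorname{Re}\bigl(1+z_1w''(z_1)/w'(z_1)\bigr)\ge m$ (in particular $w'(z_1)\neq0$). Differentiating the identity $p(r_0z)=q(w(z))$ once and twice at $z=z_1$ and writing $r:=p(z_0)$, $s:=z_0p'(z_0)$, $t:=z_0^2p''(z_0)$ yields $r=q(\zeta_0)$, $s=m\zeta_0q'(\zeta_0)$, and the identity
\[ 1+\frac{t}{s}=1+\frac{z_0p''(z_0)}{p'(z_0)}=\frac{m\zeta_0q''(\zeta_0)}{q'(\zeta_0)}+\Bigl(1+\frac{z_1w''(z_1)}{w'(z_1)}\Bigr), \]
whence $\operatorname{Re}(t/s+1)\ge m\operatorname{Re}\bigl(\zeta_0q''(\zeta_0)/q'(\zeta_0)+1\bigr)$. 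These are precisely the hypotheses in the definition of $\Psi_n[\Omega,q]$ (with $\zeta=\zeta_0$, $m\ge n$), so $\psi(r,s,t;z_0)\notin\Omega$. But $(r,s,t)=(p(z_0),z_0p'(z_0),z_0^2p''(z_0))$, so $\psi(r,s,t;z_0)\in\Omega$ by \eqref{adm basic} --- a contradiction. Hence $p\prec q$.

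The main obstacle is the contact-point lemma that produces $z_0$, $\zeta_0$ and the real multiplier $m\ge n$, i.e.\ Miller and Mocanu's fundamental lemma \cite{MR1760285}. One must handle $q$ being injective only on $\overline{\mathbb{D}}\setminus\mathbf{E}(q)$ and possibly having poles on $\partial\mathbb{D}$, so that the minimal-radius argument, the boundary correspondence, and the local analytic inversion of $q$ near $\zeta_0$ are all justified in that generality; and establishing that $m$ is real with $m\ge n$ and $\operatorname{Re}(1+z_1w''(z_1)/w'(z_1))\ge m$ requires the first/second-order analysis of $|w|$ on and near $\partial\mathbb{D}$ indicated above. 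Once that lemma is in hand, the rest is the short chain-rule computation and contradiction; I expect the bookkeeping of the factors $r_0$ and $z_1$ when transferring the $w$-relations at $z_1$ to the $p$-relations at $z_0$ to be the only delicate routine point.
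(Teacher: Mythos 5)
This theorem is quoted in the paper from Miller and Mocanu's monograph \cite[Theorem 2.3b]{MR1760285} without proof, so there is no in-paper argument to compare against; your reconstruction is the standard one from that source. Your contrapositive argument is correct: the contact-point lemma (Miller--Mocanu's Lemma 2.2d, built on the second-order Jack lemma) yields $r=q(\zeta_0)$, $s=m\zeta_0 q'(\zeta_0)$ with $m\ge n$ and $\operatorname{Re}(t/s+1)\ge m\operatorname{Re}(\zeta_0 q''(\zeta_0)/q'(\zeta_0)+1)$, which contradicts admissibility, and your chain-rule bookkeeping transferring the relations at $z_1$ to $z_0=r_0z_1$ checks out.
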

If $\Omega$ is a simply connected region which is not the whole complex plane, then there is a conformal mapping $h$ from $\mathbb{D}$ onto $\Omega$ satisfying $h(0)=\psi(a,0,0;0).$ Thus, for $p\in\mathcal{H}[a,n],$ \eqref{adm basic} can be written as
\begin{equation}\label{adm h}
	\psi(p(z),zp'(z),z^2p''(z);z)\prec h(z) \ \ \Rightarrow \ \ p(z)\prec q(z).
\end{equation}

	The univalent function $q$ is said to be the \textit{dominant of the solutions} of the second order differential equation \eqref{adm h}. The dominant $\tilde{q}$ that satisfies $\tilde{q}\prec q$ for all the dominants of \eqref{adm h} is said to be the \textit{best dominant} of \eqref{adm h}.
	
	Consider the function $q:\mathbb{D} \to \mathbb{C}$ defined by $q(z)=\sqrt{1+z},\ z\in\mathbb{D}.$ Clearly, the function $q$ is univalent in $\overline{\mathbb{D}}\setminus\{-1\}.$ Thus, $q\in\mathcal{Q}$ with $E(q)=\{-1\}$ and $q(\mathbb{D})=\{w:|w^2-1|<1\}.$ We now define the admissibility conditions for the function $\sqrt{1+z}.$ Denote $\Psi_n[\Omega,\sqrt{1+z}]$ by $\Psi_n[\Omega,\mathcal{L}].$ Further, the case when $\Omega=\Delta=\{w:|w^2-1|<1\,,\operatorname{Re}w>0\},\ \Psi_n[\Omega,\sqrt{1+z}]$ is denoted by $\Psi_n[\mathcal{L}].$
	
	If $|\zeta|=1,$ then \[
	q(\zeta)\in q(\partial\mathbb{D})=\partial q(\mathbb{D})=\{w:|w^2-1|=1\}
	=\left\{\sqrt{2\cos{2\theta}}e^{i\theta}:-\frac{\pi}{4}<\theta<\frac{\pi}{4}\right\}.\]
	Then, for $\zeta= 2\cos{2\theta} e^{2i\theta} -1,$ we have 
	\[	\zeta q'(\zeta)=\frac{1}{2}\left(\sqrt{2\cos 2 \theta} e^{i\theta}-\frac{1}{\sqrt{2\cos 2 \theta} e^{i\theta}} \right)  \quad 
	\text{and}\quad q''(\zeta)=\dfrac{-1}{4(2\cos{2\theta}e^{2i\theta})^{3/2}}\]
	and hence  \[\operatorname{Re} \left(\frac{\zeta q''(\zeta)}{q'(\zeta)}+1 \right)=\operatorname{Re} \left( \frac{e^{-2 i \theta }}{4 \cos 2 \theta}+\frac{1}{2} \right)=\frac{3}{4}.\]
	
	Thus, the condition of admissibility reduces to $\psi(r,s,t;z)\not \in \Omega$ whenever $(r,s,t;z)\in \operatorname{Dom}\psi$ and
	\begin{equation}\label{adm for q}
	\begin{split}
	&r=\sqrt{2\cos 2 \theta}e^{i \theta},\\ &s=\displaystyle{\frac{m}{2}\left(\sqrt{2\cos 2 \theta} e^{i\theta}-\frac{1}{\sqrt{2\cos 2 \theta} e^{i\theta}} \right)=\frac{me^{3i\theta}}{2\sqrt{2\cos2\theta}}},\\ &\displaystyle{\operatorname{Re}\left( \frac{t}{s}+1\right)\geq\frac{3m}{4}}
	\end{split}
	\end{equation}
	where $\theta\in(-\pi/4,\pi/4)$ and $m\geq n\geq 1.$
	
	
	As a particular case of Theorem \ref{main thm}, we have
	\begin{theorem}\label{lem thm}
		Let $p\in \mathcal{H}[1,n]$ with $p(z)\not\equiv 1$ and $n\geq 1.$ Let $\Omega\subset\mathbb{C}$ and $\psi:\mathbb{C}^3\times\mathbb{D}\to\mathbb{C}$ with domain $D$ satisfy \[\psi(r,s,t;z)\not\in\Omega\ \text{ whenever } z\in\mathbb{D},  \]
		for $r=\sqrt{2\cos 2\theta}e^{i\theta},\ s=me^{3i\theta}/(2\sqrt{2\cos2\theta})$ and $\operatorname{Re}(t/s+1)\geq 3m/4$ where $m\geq n\geq1$ and $-\pi/4<\theta<\pi/4.$ For $z\in\mathbb{D},$ if $(p(z),zp'(z),z^2p''(z);z)\in D,$ and $\psi(p(z),zp'(z),z^2p''(z);z)\in \Omega,$ then $p(z)\prec\sqrt{1+z}.$
	\end{theorem}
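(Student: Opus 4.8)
The plan is to obtain Theorem \ref{lem thm} as the special case $q(z)=\sqrt{1+z}$ of Theorem \ref{main thm}. First I would record that this choice of $q$ meets the standing hypotheses of that theorem: as already noted in the preceding discussion, $q$ is analytic and injective on $\overline{\mathbb{D}}\setminus\{-1\}$ with $q'(\zeta)\neq 0$ on $\partial\mathbb{D}\setminus\{-1\}$, so $q\in\mathcal{Q}$ with $E(q)=\{-1\}$, $q(0)=1$, and $q(\mathbb{D})=\{w:|w^2-1|<1\}$. Since the function $p$ in Theorem \ref{lem thm} lies in $\mathcal{H}[1,n]$, it belongs to the class $\mathcal{H}[a,n]$ with $a=q(0)=1$ required by Theorem \ref{main thm}. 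Thus the whole content of the proof reduces to showing that the concrete hypothesis imposed on $\psi$ in Theorem \ref{lem thm} is exactly the statement $\psi\in\Psi_n[\Omega,\sqrt{1+z}]=\Psi_n[\Omega,\mathcal{L}]$.

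To carry this out I would use the boundary parametrization already set up above: every $\zeta\in\partial\mathbb{D}\setminus\{-1\}$ is uniquely of the form $\zeta=2\cos 2\theta\,e^{2i\theta}-1$ with $\theta\in(-\pi/4,\pi/4)$, and for such $\zeta$ the computations recorded in the text give $r=q(\zeta)=\sqrt{2\cos 2\theta}\,e^{i\theta}$, $s=m\zeta q'(\zeta)=m e^{3i\theta}/(2\sqrt{2\cos 2\theta})$, and $\operatorname{Re}\bigl(\zeta q''(\zeta)/q'(\zeta)+1\bigr)=3/4$. Substituting the last identity into the admissibility inequality $\operatorname{Re}(t/s+1)\geq m\operatorname{Re}\bigl(\zeta q''(\zeta)/q'(\zeta)+1\bigr)$ collapses it to $\operatorname{Re}(t/s+1)\geq 3m/4$. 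Hence the requirement ``$\psi(r,s,t;z)\notin\Omega$ whenever $r=\sqrt{2\cos 2\theta}\,e^{i\theta}$, $s=me^{3i\theta}/(2\sqrt{2\cos 2\theta})$, $\operatorname{Re}(t/s+1)\geq 3m/4$, $\theta\in(-\pi/4,\pi/4)$, $m\geq n\geq 1$'' is word for word the admissibility condition defining $\Psi_n[\Omega,\sqrt{1+z}]$ for the dominant $q=\sqrt{1+z}$. Once this identification is in place, Theorem \ref{main thm} applies verbatim: given $p\in\mathcal{H}[1,n]$ with $(p(z),zp'(z),z^2p''(z);z)$ in the domain $D$ of $\psi$ for all $z\in\mathbb{D}$ and $\psi(p(z),zp'(z),z^2p''(z);z)\in\Omega$, we conclude $p(z)\prec\sqrt{1+z}$.

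I do not anticipate any genuine obstacle, since the statement is essentially a repackaging of the Miller--Mocanu admissibility principle; the only points needing care are bookkeeping. One should verify that $\theta\mapsto 2\cos 2\theta\,e^{2i\theta}-1$ is indeed a bijection of $(-\pi/4,\pi/4)$ onto $\partial\mathbb{D}\setminus\{-1\}$, that the branch of the square root used in $q(\zeta)=\sqrt{2\cos 2\theta}\,e^{i\theta}$ is the one normalized by $q(0)=1$ (legitimate because $\cos 2\theta>0$ on this range), and that the algebraic simplifications for $s$ and for $\operatorname{Re}(\zeta q''(\zeta)/q'(\zeta)+1)$ are as stated. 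Finally, the hypothesis $p(z)\not\equiv 1$ merely removes the degenerate constant case, in which $p\prec\sqrt{1+z}$ holds trivially, so nothing is lost by invoking it.
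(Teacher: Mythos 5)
Your proposal is correct and is exactly the paper's route: the paper derives the boundary data $r=\sqrt{2\cos 2\theta}\,e^{i\theta}$, $s=me^{3i\theta}/(2\sqrt{2\cos 2\theta})$ and $\operatorname{Re}(\zeta q''(\zeta)/q'(\zeta)+1)=3/4$ for $q(z)=\sqrt{1+z}$ immediately before the statement, and then presents the theorem as the specialization of Theorem \ref{main thm} to this $q$, which is precisely the identification $\psi\in\Psi_n[\Omega,\mathcal{L}]$ you make.
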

	The case when $\psi\in\Psi_n[\mathcal{L}]$ with domain $D,$ the above theorem reduces to the case: For $z\in \mathbb{D},$ if $(p(z),zp'(z),z^2p''(z);z)\in D$ and $\psi(p(z),zp'(z),z^2p''(z);z)\prec \sqrt{1+z},$ then $p(z)\prec\sqrt{1+z}.$
	
	We now illustrate the above result for certain $\Omega.$ Throughout $r,s,t$ refer to as mentioned in \eqref{adm for q}.
	\begin{example}
		Let $\displaystyle{\Omega=\{w:|w-1|<1/(2\sqrt{2})\}}$ and define $\psi:\mathbb{C}^3\times\mathbb{D}\to \mathbb{C} $ by $\psi(a,b,c;z)=1+b.$
		For $\psi$ to be in $\Psi[\Omega,\mathcal{L}],$ we must have $\psi(r,s,t;z)\not\in\Omega$ for 
		$z\in\mathbb{D}.$
		Then, $\psi(r,s,t;z)$ is given by
		\begin{align*}
		\psi(r,s,t;z)&=1+\frac{me^{3i\theta}}{2\sqrt{2\cos2\theta}}\\
		\intertext{and therefore we have that}
		|\psi(r,s,t;z)-1|&=\left|\frac{me^{3i\theta}}{2\sqrt{2\cos2\theta}}\right|=\frac{m}{2\sqrt{2\cos2\theta}}\geq\frac{m}{2\sqrt{2}}\geq\frac{1}{2\sqrt{2}}.
		\end{align*}
		Thus, $\psi\in\Psi[\Omega,\mathcal{L}].$
		Hence, whenever $p\in\mathcal{H}_1$ such that $|zp'(z)|< 1/(2\sqrt{2}),$ then $p(z)\prec \sqrt{1+z}.$
	\end{example}
	\begin{example}
		Let $\Omega=\{w:\operatorname{Re}w<1/4\}$ and define $\psi:(\mathbb{C}\setminus\{0\})\times\mathbb{C}^2\times\mathbb{D}\to\mathbb{C}$ by
		$\psi(a,b,c;z)=b/a.$
		For $\psi$ to be in $\Psi[\Omega,\mathcal{L}],$ we must have $\psi(r,s,t;z)\not\in\Omega$ for 
		$z\in\mathbb{D}.$
		Now, consider $\psi(r,s,t;z)$ given by
		\begin{align*}
		\psi(r,s,t;z)&=\frac{s}{r}=\frac{me^{2i\theta}}{4\cos2\theta}.\\
		\intertext{Then}
		\operatorname{Re}\psi(r,s,t;z)&=\frac{m}{4\cos2\theta}\operatorname{Re}(e^{2i\theta})=\frac{m}{4} \geq\frac{1}{4}.
		\end{align*}
		That is $\psi(r,s,t;z)\not\in \Omega.$ Hence, we see that $\psi \in \Psi[\Omega,\mathcal{L}].$
		Therefore, for $p(z)\in \mathcal{H}_1$ if \[\operatorname{Re}\left(\frac{zp'(z)}{p(z)}\right)<\frac{1}{4},\]
		then $p(z)\prec \sqrt{1+z}.$ 		Moreover, the result is sharp as for $p(z)=\sqrt{1+z},$ we have \[ \operatorname{Re}\left(\frac{zp'(z)}{p(z)}\right)= \operatorname{Re}\left(\frac{z}{2(1+z)}\right)\to \frac{1}{4} \text{ as } z\to 1.  \]
		That is $\sqrt{1+z}$ is the best dominant.
	\end{example}
 
	\begin{example}
		Let $\Omega=\{w:|w-1|<1/(4\sqrt{2})\}$ and define $\psi:(\mathbb{C}\setminus\{0\})\times\mathbb{C}^2\times\mathbb{D}\to \mathbb{C} $ by $\psi(a,b,c;z)=1+b/a^2.$
		For $\psi$ to be in $\Psi[\Omega,\mathcal{L}],$ we must have $\psi(r,s,t;z)\not\in\Omega$ for 
		$z\in\mathbb{D}.$
		Then, $\psi(r,s,t;z)$ is given by
		\begin{align*}
		\psi(r,s,t;z)&=1+\frac{me^{i\theta}}{2(2\cos2\theta)^{3/2}}\\
		\intertext{and so}
		|\psi(r,s,t;z)-1|&=\left|\frac{me^{i\theta}}{2(2\cos2\theta)^{3/2}}\right|=\frac{m}{4\sqrt{2}(\cos2\theta)^{3/2}}\geq\frac{m}{4\sqrt{2}}\geq\frac{1}{4\sqrt{2}}.
		\end{align*}
		Thus, $\psi\in\Psi[\Omega,\mathcal{L}].$
		Hence, whenever $p\in\mathcal{H}_1$ such that \[ \left|\frac{zp'(z)}{p^2(z)}\right|< \frac{1}{4\sqrt{2}},  \] then $p(z)\prec \sqrt{1+z}.$
	\end{example}
\section{First Order Differential Subordination}\label{fods}
In case of first order differential subordination, Theorem \ref{lem thm} reduces to:
\begin{theorem}
	Let $p\in \mathcal{H}[1,n]$ with $p(z)\not\equiv 1$ and $n\geq 1.$ Let $\Omega\subset\mathbb{C}$ and $\psi:\mathbb{C}^2\times\mathbb{D}\to\mathbb{C}$ with domain $D$ satisfy \[\psi(r,s;z)\not\in\Omega\ \text{ whenever } z\in\mathbb{D},  \]for $r=\sqrt{2\cos 2\theta}e^{i\theta}$ and $s=me^{3i\theta}/(2\sqrt{2\cos2\theta})$ where $m\geq n\geq1$ and $-\pi/4<\theta<\pi/4.$ For $z\in\mathbb{D},$ if $(p(z),zp'(z);z)\in D$ and $\psi(p(z),zp'(z);z)\in \Omega,$ then $p(z)\prec\sqrt{1+z}.$
\end{theorem}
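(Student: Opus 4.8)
The plan is to deduce this statement directly from Theorem~\ref{lem thm} by viewing a function of two complex variables (together with $z$) as a degenerate function of three. Given $\psi:\mathbb{C}^2\times\mathbb{D}\to\mathbb{C}$ with domain $D$ as in the hypothesis, I would introduce $\Psi:\mathbb{C}^3\times\mathbb{D}\to\mathbb{C}$ defined by $\Psi(r,s,t;z):=\psi(r,s;z)$, with domain $D':=\{(r,s,t;z):(r,s;z)\in D,\ t\in\mathbb{C}\}$. The key observation is that the admissibility hypothesis in Theorem~\ref{lem thm} for the three-variable function $\Psi$ requires $\Psi(r,s,t;z)\notin\Omega$ whenever $r=\sqrt{2\cos2\theta}e^{i\theta}$, $s=me^{3i\theta}/(2\sqrt{2\cos2\theta})$ and $\operatorname{Re}(t/s+1)\ge 3m/4$ with $m\ge n\ge1$ and $-\pi/4<\theta<\pi/4$. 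Since $\Psi$ does not depend on its third slot, the inequality on $t$ is never actually invoked, and $\Psi(r,s,t;z)=\psi(r,s;z)\notin\Omega$ holds for $z\in\mathbb{D}$ by the assumed property of $\psi$. Hence $\Psi$ satisfies all the hypotheses of Theorem~\ref{lem thm}.

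Next I would transfer the conclusion. Suppose $p\in\mathcal{H}[1,n]$ with $p(z)\not\equiv1$ satisfies $(p(z),zp'(z);z)\in D$ and $\psi(p(z),zp'(z);z)\in\Omega$ for all $z\in\mathbb{D}$. Then $(p(z),zp'(z),z^2p''(z);z)\in D'$ automatically, and
\[
\Psi(p(z),zp'(z),z^2p''(z);z)=\psi(p(z),zp'(z);z)\in\Omega .
\]
Applying Theorem~\ref{lem thm} to $\Psi$ and $p$ now yields $p(z)\prec\sqrt{1+z}$, which is exactly the assertion to be proved.

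There is essentially no analytic obstacle: the result is a routine specialization of Theorem~\ref{lem thm}, and the only point that deserves explicit mention is that suppressing the dependence on $t$ renders the third admissibility inequality $\operatorname{Re}(t/s+1)\ge 3m/4$ vacuous, so that no information on $z^2p''(z)$ is needed and the argument collapses to first order. One could alternatively invoke the first-order admissibility theory of Miller and Mocanu directly, but deriving the statement from the already-established Theorem~\ref{lem thm} keeps the exposition self-contained and prepares the concrete form of $\psi$, $\Omega$ used repeatedly in Section~\ref{fods}.
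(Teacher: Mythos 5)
Your argument is correct and is exactly how the paper treats this statement: the paper offers no separate proof, simply asserting that Theorem~\ref{lem thm} ``reduces to'' the first-order case, and your extension of $\psi$ to a $t$-independent $\Psi$ on $D\times\mathbb{C}$ (which makes the condition $\operatorname{Re}(t/s+1)\geq 3m/4$ vacuous, since $s\neq 0$ guarantees admissible values of $t$ exist) is the standard way to make that reduction precise. No gaps.
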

Likewise for an analytic function $h,$ if $\Omega=h(\mathbb{D}),$ then the above theorem becomes
\[ \psi(p(z),zp'(z);z)\prec h(z) \Rightarrow p(z)\prec \sqrt{1+z}.  \]

Using the above theorem, now some sufficient conditions are determined for $p\in\mathcal{H}_1$ to satisfy $p(z)\prec\sqrt{1+z}$ and hence sufficient conditions are obtained for function $f\in\mathcal{A}$ to belong to the class $\mathcal{SL}.$

Kumar \emph{et al.} \cite{MR3063215} proved that for $\beta>0$ if $p(z)+\beta zp'(z)/p^n(z)\prec\sqrt{1+z}\ (n=0,1,2),$ then $p(z)\prec\sqrt{1+z}.$ Extending this, we obtain lower bound for $\beta$ so that $p(z)\prec\sqrt{1+z}$ whenever $p(z)+\beta zp'(z)/p^n(z)\prec\sqrt{1+z}\ (n=3,4).$
\begin{lemma}\label{extended version}
	Let $p$ be analytic in $\mathbb{D}$ and $p(0)=1$ and $\beta_0=1.1874.$ Let \[p(z)+\frac{\beta zp'(z)}{p^3(z)}\prec\sqrt{1+z} \ (\beta>\beta_0),  \] then \[p(z)\prec\sqrt{1+z}.  \]
\end{lemma}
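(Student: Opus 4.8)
The plan is to apply Theorem~\ref{lem thm} (the admissibility formulation for $\sqrt{1+z}$) with $n=1$ and the appropriate domain. Set $\psi(r,s;z)=r+\beta s/r^3$, so that $\psi(p(z),zp'(z);z)=p(z)+\beta zp'(z)/p^3(z)$, and let $\Omega=q(\mathbb{D})=\{w:|w^2-1|<1\}$ where $q(z)=\sqrt{1+z}$. The hypothesis $p(z)+\beta zp'(z)/p^3(z)\prec\sqrt{1+z}$ says exactly that $\psi(p(z),zp'(z);z)\in\Omega$. Thus, by Theorem~\ref{lem thm}, it suffices to verify the admissibility condition: for $r=\sqrt{2\cos2\theta}\,e^{i\theta}$, $s=me^{3i\theta}/(2\sqrt{2\cos2\theta})$ with $m\geq1$ and $\theta\in(-\pi/4,\pi/4)$, we must show $\psi(r,s;z)\notin\Omega$, i.e. $|\psi(r,s;z)^2-1|\geq1$.

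The key computation is to evaluate $\psi(r,s;z)=r+\beta s/r^3$ along the boundary parametrization. Using $r^2=2\cos2\theta\,e^{2i\theta}$ one gets $s/r^3 = \bigl(me^{3i\theta}/(2\sqrt{2\cos2\theta})\bigr)\big/\bigl((2\cos2\theta)^{3/2}e^{3i\theta}\bigr)=m/\bigl(2(2\cos2\theta)^2\bigr)$, which is a \emph{positive real number}. Hence
\[
\psi(r,s;z)=\sqrt{2\cos2\theta}\,e^{i\theta}+\frac{\beta m}{2(2\cos2\theta)^2}=:u+iv,
\]
where $v=\sqrt{2\cos2\theta}\,\sin\theta$ and $u=\sqrt{2\cos2\theta}\,\cos\theta+\beta m/(2(2\cos2\theta)^2)$. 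I would then expand $|\psi^2-1|^2=(u^2-v^2-1)^2+4u^2v^2$ and show it is $\geq1$. It is convenient to write $x:=2\cos2\theta\in(0,2]$, so that $\sqrt{2\cos2\theta}\,e^{i\theta}$ has $u_0:=\sqrt{x}\cos\theta$, $v_0:=\sqrt{x}\sin\theta$ with $u_0^2-v_0^2=x\cos2\theta=x^2/2$ and $u_0^2+v_0^2=x$. Writing $u=u_0+c$ with $c:=\beta m/(2x^2)\geq\beta/8$ (since $x\le2$, $m\ge1$), the quantity $|\psi^2-1|$ becomes a function of $\theta$ (equivalently $x$ and the sign of $\theta$) and $c$; one shows the minimum over the admissible range is $\geq1$ precisely when $\beta\geq\beta_0=1.1874$. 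The worst case should occur as $\theta\to0$ (i.e. $x\to2$), where $u_0\to\sqrt2$, $v_0\to0$, $c\to\beta m/8\geq\beta/8$, giving $|\psi^2-1|=|(\sqrt2+c)^2-1|=1+2\sqrt2 c+c^2>1$ automatically; so in fact the delicate range is near the endpoints $\theta\to\pm\pi/4$ where $x\to0$ and $c\to\infty$, or some interior critical point — identifying this is the crux.

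The main obstacle will be the optimization over $\theta\in(-\pi/4,\pi/4)$ and $m\geq1$: one must locate where $|\psi(r,s;z)^2-1|$ is minimized as a function of these parameters and confirm that the minimal value equals $1$ exactly at $\beta=\beta_0$. Since $c$ depends on $m$ only through the monotone factor $\beta m$ with $m\ge1$, the infimum over $m$ is attained at $m=1$, reducing to a one-variable problem in $\theta$ (or $x$). I would differentiate $g(x):=(u^2-v^2-1)^2+4u^2v^2$ in $x$, reduce the critical-point equation to a polynomial, solve it numerically, and check that the resulting minimum of $\sqrt{g}$ over $x\in(0,2]$ is $1$ when $\beta=1.1874$ (and $\geq1$ for all larger $\beta$, by monotonicity of $g$ in $\beta$). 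The value $\beta_0=1.1874$ being a decimal approximation, the argument will really be: for every $\beta>\beta_0$ the strict inequality $g(x)>1$ holds on $(0,2]$, hence $\psi\in\Psi[\Omega,\mathcal{L}]$, and Theorem~\ref{lem thm} yields $p(z)\prec\sqrt{1+z}$.
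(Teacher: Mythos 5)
Your setup coincides with the paper's: same admissible function $\psi(a,b;z)=a+\beta b/a^{3}$, same boundary parametrization, and the same key simplification that $s/r^{3}=m/(8\cos^{2}2\theta)$ is a positive real, so that $\psi(r,s;z)=\sqrt{2\cos2\theta}\,e^{i\theta}+\beta m/(8\cos^{2}2\theta)$. The paper then expands
\[
|\psi(r,s;z)^{2}-1|^{2}=1+\frac{\beta^{2}m^{2}}{32}\bigl(4\sec^{3}2\theta+2\sec^{2}2\theta-\sec^{4}2\theta\bigr)+\frac{\beta m}{\sqrt{2}}\sec^{3/2}2\theta\,\cos3\theta+\frac{\beta^{4}m^{4}}{4096}\sec^{8}2\theta+\frac{\beta^{3}m^{3}}{64\sqrt{2}}\sec^{11/2}2\theta\,\cos\theta=:g(\theta),
\]
asserts (via a second derivative test) that $g$ attains its minimum at $\theta=0$ precisely when $\beta m>1.1874$, and concludes since $\min g=1+\beta m/\sqrt{2}+5\beta^{2}m^{2}/32+\beta^{3}m^{3}/(64\sqrt{2})+\beta^{4}m^{4}/4096>1$. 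So the number $\beta_{0}=1.1874$ enters not as the threshold at which the minimum of $g$ equals $1$ (at $\theta=0$ the value exceeds $1$ for every $\beta>0$, as you observe), but as the threshold beyond which $\theta=0$ is the global minimizer. Your proposal stops exactly at this point: the location of the minimum, which is the entire content of the lemma, is deferred to a future numerical computation. That is the genuine gap.

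Two further points in your sketch would need repair even as a plan. First, your heuristic that ``the delicate range is near the endpoints $\theta\to\pm\pi/4$'' points the wrong way: there $\sec2\theta\to\infty$, the dominant term $\beta^{4}m^{4}\sec^{8}2\theta/4096$ blows up, and $|\psi^{2}-1|\to\infty$; the competition is between $\theta=0$ and possible interior critical points, which is what the condition $\beta m>\beta_{0}$ resolves. Second, your reduction ``the infimum over $m$ is attained at $m=1$, reducing to a one-variable problem in $\theta$'' is not immediate: $g$ depends on $m$ through $y=\beta m$, but the coefficient of $y^{2}$ in $g(\theta)$, namely $(4\sec^{3}2\theta+2\sec^{2}2\theta-\sec^{4}2\theta)/32$, becomes negative for $\sec2\theta>2+\sqrt{6}$, and the coefficient of $y$ is negative where $\cos3\theta<0$, so monotonicity of $g$ in $y$ for fixed $\theta$ requires an argument. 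The paper avoids this by optimizing in $\theta$ first and only then letting $m\downarrow 1$ at $\theta=0$, where every term is manifestly increasing in $\beta m$.
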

\begin{proof}
	Let $\beta>0.$ Let $\Delta=\{w:|w^2-1|<1,\operatorname{Re}w>0\}.$ Let $\psi:(\mathbb{C}\setminus\{0\})\times\mathbb{C}\times \mathbb{D}\to \mathbb{C}$ be defined by $\psi(a,b;z)=a+\beta b/a^3.$
	For $\psi$ to be in $\Psi[\mathcal{L}],$ we must have $\psi(r,s;z)\not\in\Delta$ for
	$z\in\mathbb{D}.$
	Then, $\psi(r,s;z)$ is given by
	\begin{align*}
	\psi(r,s;z)&=\sqrt{2\cos2\theta}e^{i\theta}+\frac{\beta m}{8\cos^2 2\theta},
	\intertext{so that}
	|\psi(r,s;z)^2-1|^2&=1+\frac{\beta^2m^2}{32}(4\sec^3 2\theta+2\sec^2 2\theta-\sec^4 2\theta)+\frac{\beta m}{\sqrt{2}}\sec^{3/2}2\theta\cos3\theta\\
	&\quad{}+\frac{\beta^4m^4}{4096}\sec^8 2\theta+\frac{\beta^3m^3}{64\sqrt{2}}\sec^{11/2}2\theta\cos\theta=:g(\theta)
	\end{align*}
	Observe that $g(\theta)=g(-\theta)$ for all $\theta\in(-\pi/4,\pi/4)$ and the second derivative test shows that the minimum of $g$ occurs at $\theta=0$ for $\beta m>1.1874.$
	For $\beta>1.1874,$ we have $\beta m>1.1874.$ Thus, $g(\theta)$ attains its minimum at $\theta=0$ for $\beta>\beta_0.$
	For $\psi\in \Psi[\mathcal{L}],$ we must have $g(\theta)\geq 1$ for every $\theta \in (-\pi/4,\pi/4)$ and since
	\begin{align*}
	\min g(\theta)&=1+\frac{\beta m}{\sqrt{2}}+\frac{5\beta^2m^2}{32}+\frac{\beta^3m^3}{64\sqrt{2}}+\frac{\beta^4m^4}{4096}\\
	&\geq 1+\frac{\beta }{\sqrt{2}}+\frac{5\beta^2}{32}+\frac{\beta^3}{64\sqrt{2}}+\frac{\beta^4}{4096}>1.
	\end{align*}
	Hence for $\beta>\beta_0,\ \psi\in\Psi[\mathcal{L}]$ and therefore, for $p(z)\in \mathcal{H}_1,$ if \[ p(z)+\frac{\beta zp'(z)}{p^3(z)}\prec\sqrt{1+z}\ (\beta>\beta_0),  \]
	we have \[ p(z)\prec\sqrt{1+z}.   \qedhere\]	
\end{proof}
\begin{lemma}
	Let $p$ be analytic in $\mathbb{D}$ and $p(0)=1$ and $\beta_0=3.58095.$ Let \[p(z)+\frac{\beta zp'(z)}{p^4(z)}\prec\sqrt{1+z} \ (\beta>\beta_0),  \] then \[p(z)\prec\sqrt{1+z}.  \]
\end{lemma}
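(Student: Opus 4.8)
The plan is to mirror, with exponent $n=4$, the argument used for Lemma~\ref{extended version}. Fix $\beta>0$, put $\Delta=\{w:|w^2-1|<1,\ \operatorname{Re}w>0\}$, and define $\psi:(\mathbb{C}\setminus\{0\})\times\mathbb{C}\times\mathbb{D}\to\mathbb{C}$ by $\psi(a,b;z)=a+\beta b/a^4$. By Theorem~\ref{lem thm} in its subordination form, it is enough to verify that $\psi\in\Psi[\mathcal{L}]$, i.e.\ that $\psi(r,s;z)\notin\Delta$ whenever $r=\sqrt{2\cos2\theta}\,e^{i\theta}$, $s=me^{3i\theta}/(2\sqrt{2\cos2\theta})$ with $-\pi/4<\theta<\pi/4$ and $m\geq1$.

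First I would substitute: since $r^4=4\cos^2 2\theta\,e^{4i\theta}$, a short computation gives
\[
\psi(r,s;z)=\sqrt{2\cos2\theta}\,e^{i\theta}+\frac{\beta m}{8\sqrt{2}}(\sec 2\theta)^{5/2}e^{-i\theta},
\]
the only structural change from the $n=3$ case being the extra factor $e^{-i\theta}$ in the second term. Squaring, and abbreviating $P=2\cos2\theta$, $Q=\tfrac{\beta m}{4}\sec^2 2\theta-1$, $S=\tfrac{\beta^2m^2}{128}\sec^5 2\theta$ so that $\psi(r,s;z)^2-1=Pe^{2i\theta}+Q+Se^{-2i\theta}$, one arrives at
\[
\bigl|\psi(r,s;z)^2-1\bigr|^2=P^2+Q^2+S^2+2Q(P+S)\cos2\theta+2PS\cos4\theta=:g(\theta),
\]
which is the analogue of the function $g(\theta)$ appearing in Lemma~\ref{extended version}; it is a finite combination of powers of $\sec2\theta$ with the cosines $\cos2\theta,\cos4\theta$, and is manifestly even, $g(\theta)=g(-\theta)$. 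Note that $\psi(r,s;z)\notin\Delta$ will follow as soon as $g(\theta)\geq1$, without appealing to the half-plane part of $\Delta$.

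The heart of the proof — and the only genuine obstacle — is to show that for $\beta m>\beta_0=3.58095$ the minimum of $g$ on $(-\pi/4,\pi/4)$ is attained at $\theta=0$. Evenness forces $g'(0)=0$; the second-derivative test then shows $g''(0)\geq0$ exactly when $\beta m$ clears the stated numerical threshold, and a further check that $g$ has no lower interior critical value pins the global minimum at $0$. Since $m\geq1$ and $\beta>\beta_0$ give $\beta m>\beta_0$, we conclude
\[
\min_{\theta}g(\theta)=g(0)=\Bigl(1+\frac{\beta m}{4}+\frac{\beta^2m^2}{128}\Bigr)^{2}\geq\Bigl(1+\frac{\beta}{4}+\frac{\beta^2}{128}\Bigr)^{2}>1.
\]
Hence $|\psi(r,s;z)^2-1|\geq1$ for all admissible $\theta$, so $\psi(r,s;z)\notin\Delta$ and $\psi\in\Psi[\mathcal{L}]$. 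Applying Theorem~\ref{lem thm} to $p\in\mathcal{H}_1$ then yields that $p(z)+\beta zp'(z)/p^4(z)\prec\sqrt{1+z}$ with $\beta>\beta_0$ implies $p(z)\prec\sqrt{1+z}$. As in Lemma~\ref{extended version}, the delicate point is purely the calculus/numerics certifying that $\theta=0$ is the location of the minimum once $\beta m>3.58095$; everything else is routine algebra, and the value $\beta_0$ is dictated by the sign change governing that minimization.
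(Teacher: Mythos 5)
Your proposal is correct and follows essentially the same route as the paper: the same admissible function $\psi(a,b;z)=a+\beta b/a^4$, the same substitution of $r,s$, and the same reduction to showing $g(\theta)\geq 1$ with the minimum located at $\theta=0$ once $\beta m>3.58095$; your expression $P^2+Q^2+S^2+2Q(P+S)\cos2\theta+2PS\cos4\theta$ expands to exactly the paper's $g(\theta)$, and your closed form $g(0)=\bigl(1+\tfrac{\beta m}{4}+\tfrac{\beta^2m^2}{128}\bigr)^2$ agrees with the paper's $\min g$. Like the paper, you leave the calculus/numerics certifying that $\theta=0$ is the global minimizer as an asserted second-derivative check, so the two arguments are at the same level of detail.
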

\begin{proof}
	Let $\beta>0.$ Let $\Delta=\{w:|w^2-1|<1,\operatorname{Re}w>0\}.$ Let $\psi:(\mathbb{C}\setminus\{0\})\times\mathbb{C}\times \mathbb{D}\to \mathbb{C}$ be defined by $\psi(a,b;z)=a+\beta b/a^4.$
	For $\psi$ to be in $\Psi[\mathcal{L}],$ we must have $\psi(r,s;z)\not\in\Delta$ for
	$z\in\mathbb{D}.$
	Then, $\psi(r,s;z)$ is given by
	\begin{align*}
	\psi(r,s;z)&=\sqrt{2\cos2\theta}e^{i\theta}+\frac{\beta me^{-i\theta}}{8\cos^2 2\theta\sqrt{2\cos2\theta} },
	\intertext{so that}
	|\psi(r,s;z)^2-1|^2&=1+\beta m(1-\frac{1}{2}\sec^2 2\theta)+\frac{\beta^2m^2}{64}(\sec^4 2\theta+4\sec^2 2\theta)\\
	&\quad{}+\frac{\beta^3m^3}{256}\sec^6 2\theta+\frac{\beta^4m^4}{128^2}\sec^{10}2\theta=:g(\theta)
	\end{align*}
	Observe that $g(\theta)=g(-\theta)$ for all $\theta\in(-\pi/4,\pi/4)$ and the second derivative test shows that $g$ attains its minimum at $\theta=0$ if $\beta m>3.58095.$
	For $\beta>3.58095,$ we have $\beta m> 3.58095.$ Thus, $g(\theta)$ attains its minimum at $\theta=0$ for $\beta>\beta_0.$
	For $\psi\in \Psi[\mathcal{L}],$ we must have $g(\theta)\geq 1$ for every $\theta \in (-\pi/4,\pi/4)$ and since
	\begin{align*}
	\min g(\theta)&=1+\frac{\beta m}{2}+\frac{5\beta^2m^2}{64}+\frac{\beta^3m^3}{256}+\frac{\beta^4m^4}{128^2}\\
	&\geq 1+\frac{\beta}{2}+\frac{5\beta^2}{64}+\frac{\beta^3}{256}+\frac{\beta^4}{128^2}>1.
	\end{align*}
	Hence for $\beta>\beta_0,\ \psi\in\Psi[\mathcal{L}]$ and therefore, for $p(z)\in \mathcal{H}_1,$ if \[ p(z)+\frac{\beta zp'(z)}{p^4(z)}\prec\sqrt{1+z}\ (\beta>\beta_0),  \]
	we have \[ p(z)\prec\sqrt{1+z}.   \qedhere\]	
\end{proof}
On the similar lines, one can find lower bound for $\beta_n$ such that $p(z)+\beta_n zp'(z)/p^n(z)\prec\sqrt{1+z},\ n\in\mathbb{N}$ implies $p(z)\prec\sqrt{1+z}.$

Now, the conditions on $\beta$ and $\gamma$ are discussed so that $p^2(z)+zp'(z)/(\beta p(z)+\gamma)\prec 1+z$ implies $p(z)\prec\sqrt{1+z}.$
\begin{lemma}
	Let $\beta,\gamma>0$ and $p$ be analytic in $\mathbb{D}$ such that $p(0)=1.$ If  \[ p^2(z)+\frac{zp'(z)}{\beta p(z)+\gamma}\prec 1+z, \] then \[ p(z)\prec \sqrt{1+z}.  \]
\end{lemma}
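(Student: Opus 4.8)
\emph{Proof proposal.} The plan is to invoke the first-order form of Theorem~\ref{lem thm} with $q(z)=\sqrt{1+z}$, with $\Omega=(1+z)(\mathbb{D})=\{w:|w-1|<1\}$, and with the candidate admissible function
\[
\psi(a,b;z)=a^{2}+\frac{b}{\beta a+\gamma},
\]
defined on $D=(\mathbb{C}\setminus\{-\gamma/\beta\})\times\mathbb{C}\times\mathbb{D}$. If $p\equiv1$ the conclusion is immediate, so assume $p\not\equiv1$; since the hypothesis that $p^{2}(z)+zp'(z)/(\beta p(z)+\gamma)$ is analytic on $\mathbb{D}$ and subordinate to $1+z$ forces $\beta p(z)+\gamma\ne0$ there, we have $(p(z),zp'(z);z)\in D$. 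It therefore remains only to check the admissibility condition: writing $c:=\sqrt{2\cos2\theta}>0$, $r=ce^{i\theta}$, $s=me^{3i\theta}/(2c)$ with $m\ge1$ and $\theta\in(-\pi/4,\pi/4)$, one must show $\psi(r,s;z)\notin\Omega$, i.e.\ $|\psi(r,s;z)-1|\ge1$.

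The key observation is that $r^{2}-1=2\cos2\theta\,e^{2i\theta}-1$ lies on $\partial\mathbb{D}$, so $|r^{2}-1|=1$: the leading term $r^{2}$ of $\psi(r,s;z)$ already sits on the boundary circle of $\Omega$. Writing $\psi(r,s;z)-1=(r^{2}-1)+s/(\beta r+\gamma)$, expanding
\[
|\psi(r,s;z)-1|^{2}=|r^{2}-1|^{2}+2\operatorname{Re}\!\left(\overline{(r^{2}-1)}\,\frac{s}{\beta r+\gamma}\right)+\left|\frac{s}{\beta r+\gamma}\right|^{2},
\]
it suffices to prove that the cross term is nonnegative, i.e.\ that the correction $s/(\beta r+\gamma)$ points out of the circle at $r^{2}$. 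Since $\operatorname{Re}(\beta r+\gamma)=\beta c\cos\theta+\gamma>0$ on the stated range, one may rationalize, $s/(\beta r+\gamma)=s\,\overline{(\beta r+\gamma)}/|\beta r+\gamma|^{2}$, and using $\overline{r^{2}-1}=c^{2}e^{-2i\theta}-1$ a short computation reduces the sign of the cross term to the sign of
\[
\operatorname{Re}\bigl((c^{2}-e^{2i\theta})(\beta c+\gamma e^{i\theta})\bigr)=\beta c^{3}+\gamma c^{2}\cos\theta-\beta c\cos2\theta-\gamma\cos3\theta.
\]
Here the identity $c^{2}=2\cos2\theta$ collapses $\beta c^{3}-\beta c\cos2\theta$ to $\beta c\cos2\theta\ge0$, and the product-to-sum identity $2\cos2\theta\cos\theta=\cos3\theta+\cos\theta$ collapses $\gamma c^{2}\cos\theta-\gamma\cos3\theta$ to $\gamma\cos\theta\ge0$, so the whole expression equals $\beta c\cos2\theta+\gamma\cos\theta>0$ for every $\theta\in(-\pi/4,\pi/4)$ and all $\beta,\gamma>0$. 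Hence the cross term is $\ge0$, $|\psi(r,s;z)-1|\ge1$, and $\psi\in\Psi[\Omega,\mathcal{L}]$.

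With admissibility established, Theorem~\ref{lem thm} applied to $\psi(p(z),zp'(z);z)=p^{2}(z)+zp'(z)/(\beta p(z)+\gamma)\prec1+z$ gives $p(z)\prec\sqrt{1+z}$, as required. The only genuine computation is the trigonometric reduction in the displayed identity, plus the mild bookkeeping of confirming $\beta p(z)+\gamma\ne0$ so that $\psi$ is evaluated inside its domain; I do not anticipate a real obstacle, since the geometric picture—$r^{2}$ already on $\partial\Omega$, with the correction term provably outward-pointing once $\beta,\gamma>0$—makes the inequality essentially forced.
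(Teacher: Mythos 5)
Your proof is correct, and it works inside the same framework as the paper's: the same admissible function $\psi(a,b;z)=a^{2}+b/(\beta a+\gamma)$, the same $\Omega=\{w:|w-1|<1\}$, and the same reduction of admissibility to the inequality $|\psi(r,s;z)-1|\geq 1$ for the boundary data $r,s$ of \eqref{adm for q}. Where you genuinely diverge is in how that inequality is verified, and your route is cleaner. The paper expands $|\psi(r,s;z)-1|^{2}$ in full into $1$ plus five terms in $\sec 2\theta$ and $\sqrt{\sec 2\theta+1}$, names the result $g(\theta)$, and then invokes an unverified second derivative test to locate the minimum of $g$ at $\theta=0$ before checking $\min g>1$ --- a detour that is in fact unnecessary, since all five correction terms are already manifestly positive on $(-\pi/4,\pi/4)$. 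Your decomposition $\psi-1=(r^{2}-1)+s/(\beta r+\gamma)$ with $|r^{2}-1|=1$ isolates exactly what matters: the squared modulus is $1$ plus a nonnegative modulus term plus a cross term, and the cross term reduces, via $c^{2}=2\cos 2\theta$ and $2\cos 2\theta\cos\theta=\cos 3\theta+\cos\theta$, to a positive multiple of $\beta c\cos 2\theta+\gamma\cos\theta>0$. I checked the reduction: $\operatorname{Re}\bigl((c^{2}-e^{2i\theta})(\beta c+\gamma e^{i\theta})\bigr)$ is the correct quantity up to the positive factor $m/(2c\,|\beta r+\gamma|^{2})$, and both trigonometric collapses are right. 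This eliminates any minimization over $\theta$, makes the role of $\beta,\gamma>0$ transparent, and your explicit handling of the domain condition $\beta p(z)+\gamma\neq 0$ and of the trivial case $p\equiv 1$ covers two points the paper leaves implicit.
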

\begin{proof}
	Let $h$ be the analytic function defined on $\mathbb{D}$ by $h(z)=1+z$ and let $\Omega=h(\mathbb{D})=\{w:|w-1|<1\}.$ Let $\psi:(\mathbb{C}\setminus\{-\gamma/\beta\})\times\mathbb{C}\times\mathbb{D}\to\mathbb{C}$ be defined by \[\psi(r,s;z)=r^2+\frac{s}{\beta r+\gamma}.\] For $\psi$ to be in $\Psi[\Omega,\mathcal{L}],$ we must have $\psi(r,s;z)\not\in\Omega$ for 
	$z\in\mathbb{D}.$ Then, $\psi(r,s;z)$ is given by
	\begin{align*}
	\psi(r,s;z)&=2\cos2\theta e^{2i\theta}+\frac{me^{3i\theta}}{(2\sqrt{2\cos2\theta})(\beta\sqrt{2\cos2\theta}e^{i\theta}+\gamma)},
	\intertext{and so}
	|\psi(r,s;z)-1|^2&=\left[\cos\theta+\frac{m\beta\sqrt{2\cos2\theta}\cos\theta+\gamma m}{2\sqrt{2\cos2\theta}d(\theta)}\right]^2\\
	&\quad{}+\left[\sin\theta-\frac{m\beta\sqrt{2\cos2\theta}\sin\theta}{2\sqrt{2\cos2\theta}d(\theta)}\right]^2,
	\intertext{where $d(\theta)=|\beta\sqrt{2\cos2\theta} e^{i\theta}+\gamma|^2=\cos2\theta(2\beta^2+\gamma^2\sec2\theta+2\beta\gamma\sqrt{\sec2\theta+1}).$}
	\intertext{Hence on solving, we get that}
	|\psi(r,s;z)-1|^2&=1+\frac{\beta^2m^2\sec^2 2\theta}{4(2\beta^2+\gamma^2\sec2\theta+2\beta\gamma\sqrt{\sec2\theta+1})^2}\\
	&\quad{}+\frac{\gamma^2m^2\sec^3 2\theta}{8(2\beta^2+\gamma^2\sec2\theta+2\beta\gamma\sqrt{\sec2\theta+1})^2}\\
	&\quad{}+\frac{\beta\gamma m^2\sqrt{\sec2\theta+1}\sec^2 2\theta}{4(2\beta^2+\gamma^2\sec2\theta+2\beta\gamma\sqrt{\sec2\theta+1})^2}\\
	&\quad{}+\frac{\beta m}{2\beta^2+\gamma^2\sec2\theta+2\beta\gamma\sqrt{\sec2\theta+1}}\\
	&\quad{}+\frac{\gamma m \sqrt{\sec2\theta+1}\sec2\theta}{2(2\beta^2+\gamma^2\sec2\theta+2\beta\gamma\sqrt{\sec2\theta+1})}=:g(\theta)
	\end{align*}
	Using the second derivative test, we get that minimum of $g$ occurs at $\theta=0.$
	For $\psi\in\Psi[\Omega,\mathcal{L}],$ we must have $g(\theta)\geq1$ for every $\theta\in(-\pi/4,\pi/4)$ and since
	\begin{align*}
	\min g(\theta)&=1+\frac{\beta^2m^2}{4(\beta\sqrt{2}+\gamma)^4}+\frac{\gamma^2m^2}{8(\beta\sqrt{2}+\gamma)^4}+\frac{\beta\gamma m^2}{2\sqrt{2}(\beta\sqrt{2}+\gamma)^4}\\
	&\quad{}+\frac{\beta m}{(\beta\sqrt{2}+\gamma)^2}+\frac{\gamma m}{\sqrt{2}(\beta\sqrt{2}+\gamma)^2}\\
	&\geq 1+\frac{\beta^2}{4(\beta\sqrt{2}+\gamma)^4}+\frac{\gamma^2}{8(\beta\sqrt{2}+\gamma)^4}+\frac{\beta\gamma}{2\sqrt{2}(\beta\sqrt{2}+\gamma)^4}+\frac{\beta}{(\beta\sqrt{2}+\gamma)^2}\\
	&\quad{}+\frac{\gamma}{\sqrt{2}(\beta\sqrt{2}+\gamma)^2}>1.
	\end{align*}
	Hence, for $\beta,\gamma>0,\ \psi\in\Psi[\Omega,\mathcal{L}]$ and therefore, for $p\in\mathcal{H}_1,$ if \[p^2(z)+\frac{zp'(z)}{\beta p(z)+\gamma}\prec 1+z,\] then \[p(z)\prec\sqrt{1+z}. \qedhere  \]
\end{proof}
Now, conditions on $\beta$ are derived so that $p^2(z)+\beta zp'(z)/p^n(z)\prec 1+z\ (n=-1,0,1,2)$ implies $p(z)\prec\sqrt{1+z}.$
\begin{lemma}
	Let $p$ be analytic in $\mathbb{D}$ with $p(0)=1.$ Let $\beta$ be a complex number such that $\operatorname{Re}\beta>0.$ If \[ p^2(z)+\beta zp'(z)p(z)\prec 1+z,  \] then \[ p(z)\prec\sqrt{1+z}.  \]
\end{lemma}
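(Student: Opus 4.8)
The plan is to apply Theorem~\ref{lem thm} (in its $h$-subordination form) with $q(z)=\sqrt{1+z}$. I would take $h(z)=1+z$, so that $\Omega=h(\mathbb{D})=\{w:|w-1|<1\}$, and define $\psi\colon\mathbb{C}^2\times\mathbb{D}\to\mathbb{C}$ by $\psi(a,b;z)=a^2+\beta ab$. The hypothesis then reads exactly $\psi(p(z),zp'(z);z)=p^2(z)+\beta zp'(z)p(z)\prec 1+z=h(z)$, so by Theorem~\ref{lem thm} it suffices to check that $\psi\in\Psi[\Omega,\mathcal{L}]$, that is, that $\psi(r,s;z)\notin\Omega$ for every $z\in\mathbb{D}$ whenever $r=\sqrt{2\cos2\theta}\,e^{i\theta}$ and $s=me^{3i\theta}/(2\sqrt{2\cos2\theta})$ with $\theta\in(-\pi/4,\pi/4)$ and $m\geq1$.

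The computation is short. Since $2\cos2\theta\,e^{2i\theta}=e^{2i\theta}\bigl(e^{2i\theta}+e^{-2i\theta}\bigr)=1+e^{4i\theta}$, one has $r^2=1+e^{4i\theta}$, while directly $rs=\frac{m}{2}e^{4i\theta}$. Hence
\[
\psi(r,s;z)-1=(r^2-1)+\beta rs=e^{4i\theta}\left(1+\frac{\beta m}{2}\right),
\]
so, writing $\beta=\beta_1+i\beta_2$ with $\beta_1=\operatorname{Re}\beta>0$,
\[
|\psi(r,s;z)-1|^2=\left|1+\frac{\beta m}{2}\right|^2=\left(1+\frac{m\beta_1}{2}\right)^2+\left(\frac{m\beta_2}{2}\right)^2\geq\left(1+\frac{m\beta_1}{2}\right)^2>1,
\]
because $m\geq1$ and $\beta_1>0$. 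Thus $\psi(r,s;z)\notin\Omega$, so $\psi\in\Psi[\Omega,\mathcal{L}]$, and Theorem~\ref{lem thm} gives $p(z)\prec\sqrt{1+z}$.

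What makes this case painless, in contrast to the preceding lemmas, is that the two ingredients $r^2-1=e^{4i\theta}$ and $rs=\frac{m}{2}e^{4i\theta}$ carry the \emph{same} unimodular factor $e^{4i\theta}$; there is no need to expand $|\psi(r,s;z)-1|^2$ as a function $g(\theta)$ and run a second-derivative test to pin its minimum at $\theta=0$, since the admissibility inequality already collapses to the one-line estimate above. I therefore do not anticipate any real obstacle; the only points to state carefully are that the estimate is uniform in $m\geq n=1$ (it is increasing in $m$, so $m=1$ is the extremal case) and that the hypothesis $\operatorname{Re}\beta>0$ — not the stronger $\beta>0$ — is precisely what the argument uses.
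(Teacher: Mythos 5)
Your proof is correct and is essentially identical to the paper's: the same admissible function $\psi(a,b;z)=a^2+\beta ab$, the same observation that $\psi(r,s;z)-1=e^{4i\theta}(1+\beta m/2)$, and the same one-line estimate $|1+\beta m/2|>1$ from $\operatorname{Re}\beta>0$ and $m\geq1$.
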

\begin{proof}
	Let $h$ be the analytic function defined on $\mathbb{D}$ by $h(z)=1+z$ and let $\Omega=h(\mathbb{D})=\{w:|w-1|<1\}.$ Let $\psi:\mathbb{C}^2\times\mathbb{D}\to\mathbb{C}$ be defined by $\psi(a,b;z)=a^2+\beta ab.$ For $\psi$ to be in $\Psi[\Omega,\mathcal{L}],$ we must have $\psi(r,s;z)\not\in\Omega$ for 
	$z\in\mathbb{D}.$ Then, $\psi(r,s;z)$ is given by
	\begin{align*}
	\psi(r,s;z)&=2\cos2\theta e^{2i\theta}+\frac{\beta me^{4i\theta}}{2}
	\intertext{and we see that}
	|\psi(r,s;z)-1|&=\left|1+\frac{m\beta}{2}\right|\geq1+\frac{m\operatorname{Re}\beta}{2}\geq1+\frac{\operatorname{Re}\beta}{2}> 1.
	\end{align*}
	Hence, for $\beta$ such that $\operatorname{Re}\beta>0,\ \psi\in\Psi[\Omega,\mathcal{L}]$ and therefore, for such complex number $\beta$ and for $p\in\mathcal{H}_1,$ if  \[p^2(z)+\beta zp(z)p'(z)\prec 1+z,\] then \[p(z)\prec\sqrt{1+z}. \qedhere \]
\end{proof}
\begin{lemma}
	Let $\beta>0$ and $p$ be analytic in $\mathbb{D}$ with $p(0)=1.$ If \[p^2(z)+\beta zp'(z)\prec 1+z,  \] then \[ p(z)\prec\sqrt{1+z}.  \]
\end{lemma}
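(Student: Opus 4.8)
The plan is to apply the $h$-version of Theorem~\ref{lem thm} with $q(z)=\sqrt{1+z}$, taking $h(z)=1+z$ so that $\Omega=h(\mathbb{D})=\{w:|w-1|<1\}$, and defining $\psi:\mathbb{C}^2\times\mathbb{D}\to\mathbb{C}$ by $\psi(a,b;z)=a^2+\beta b$. Then $\psi(p(z),zp'(z);z)=p^2(z)+\beta zp'(z)$, so it suffices to prove $\psi\in\Psi[\Omega,\mathcal{L}]$, i.e.\ that $\psi(r,s;z)\notin\Omega$ whenever $r=\sqrt{2\cos2\theta}\,e^{i\theta}$, $s=me^{3i\theta}/(2\sqrt{2\cos2\theta})$, $m\ge1$ and $\theta\in(-\pi/4,\pi/4)$; the admissibility datum $t$ never enters, since $\psi$ ignores its third slot and its domain is all of $\mathbb{C}^2\times\mathbb{D}$. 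The degenerate case $p\equiv1$ is trivial ($1\prec\sqrt{1+z}$), so we may assume $p\not\equiv1$ and use Theorem~\ref{lem thm} with $p\in\mathcal{H}_1$.

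Next I would substitute the admissibility values. Since $r^2=2\cos2\theta\,e^{2i\theta}$,
\[
\psi(r,s;z)=2\cos2\theta\,e^{2i\theta}+\frac{\beta m e^{3i\theta}}{2\sqrt{2\cos2\theta}},
\]
and I would compute $|\psi(r,s;z)-1|^2$ by expanding $|r^2-1+\beta s|^2=|r^2-1|^2+2\operatorname{Re}\big((r^2-1)\overline{\beta s}\big)+|\beta s|^2$. Here $|r^2-1|=|2\cos2\theta\,e^{2i\theta}-1|=1$ because $r$ lies on the lemniscate $\{w:|w^2-1|=1\}$, and $|\beta s|^2=\beta^2m^2/(8\cos2\theta)$. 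For the cross term, $(r^2-1)\overline{\beta s}=\dfrac{\beta m}{2\sqrt{2\cos2\theta}}\big(2\cos2\theta\,e^{-i\theta}-e^{-3i\theta}\big)$, whose real part is $\dfrac{\beta m}{2\sqrt{2\cos2\theta}}\big(2\cos2\theta\cos\theta-\cos3\theta\big)$; the product-to-sum identity $2\cos2\theta\cos\theta=\cos3\theta+\cos\theta$ collapses the parenthesis to $\cos\theta$. Hence
\[
|\psi(r,s;z)-1|^2=1+\frac{\beta m\cos\theta}{\sqrt{2\cos2\theta}}+\frac{\beta^2m^2}{8\cos2\theta}=:g(\theta).
\]

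For $\theta\in(-\pi/4,\pi/4)$ both $\cos\theta$ and $\cos2\theta$ are positive, so with $\beta>0$ and $m\ge1$ the two added terms are strictly positive and $g(\theta)>1$; in particular $g$ is minimized at $\theta=0$, with $g(0)=1+\beta m/\sqrt2+\beta^2m^2/8\ge1+\beta/\sqrt2+\beta^2/8>1$. Thus $|\psi(r,s;z)-1|>1$, so $\psi(r,s;z)\notin\Omega$ and $\psi\in\Psi[\Omega,\mathcal{L}]$, and Theorem~\ref{lem thm} gives that $p^2(z)+\beta zp'(z)\prec1+z$ implies $p(z)\prec\sqrt{1+z}$. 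I do not expect a genuine obstacle here: the only points needing care are recognizing $|r^2-1|=1$ on the lemniscate and simplifying the cross term via the product-to-sum identity, after which positivity of $g(\theta)-1$ is immediate — unlike the earlier lemmas, no second-derivative test is required because the perturbation $\beta s$ is linear in $s$ rather than involving negative powers of $r$.
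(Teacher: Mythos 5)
Your proposal is correct and follows essentially the same route as the paper: the same admissible function $\psi(a,b;z)=a^2+\beta b$ with $\Omega=\{w:|w-1|<1\}$, and the same expansion of $|\psi(r,s;z)-1|^2$ (your cross term $\beta m\cos\theta/\sqrt{2\cos2\theta}$ equals the paper's $\tfrac{\beta m}{2}\sqrt{\sec2\theta+1}$). The only cosmetic difference is that you observe positivity of both added terms directly instead of first minimizing at $\theta=0$; either way the conclusion $|\psi(r,s;z)-1|>1$ follows.
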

\begin{proof}
	Let $h$ be the analytic function defined on $\mathbb{D}$ by $h(z)=1+z$ and let $\Omega=h(\mathbb{D})=\{w:|w-1|<1\}.$ Let $\psi:\mathbb{C}^2\times\mathbb{D}\to\mathbb{C}$ be defined by $\psi(a,b;z)=a^2+\beta b.$ For $\psi$ to be in $\Psi[\Omega,\mathcal{L}],$ we must have $\psi(r,s;z)\not\in\Omega$ for 
	$z\in\mathbb{D}.$ Then, $\psi(r,s;z)$ is given by
	\begin{align*}
	\psi(r,s;z)&=2\cos2\theta e^{2i\theta}+\frac{\beta me^{3i\theta}}{2\sqrt{2\cos2\theta}},
	\intertext{and so}
	|\psi(r,s;z)-1|^2&=1+\frac{\beta^2m^2}{8}\sec2\theta+\frac{\beta m}{2}\sqrt{\sec2\theta+1}\\
	&\geq 1+\frac{\beta^2m^2}{8}+\frac{\beta m}{\sqrt{2}}\geq1+\frac{\beta^2}{8}+\frac{\beta}{\sqrt{2}}>1.
	\end{align*}
	Hence, for $\beta>0,\ \psi\in\Psi[\Omega,\mathcal{L}]$ and therefore, for $p(z)\in\mathcal{H}_1,$ if \[p^2(z)+\beta zp'(z)\prec 1+z,  \] then \[ p(z)\prec\sqrt{1+z}. \qedhere \]
\end{proof}
\begin{lemma}
	Let $\beta>0$ and $p$ be analytic in $\mathbb{D}$ with $p(0)=1.$ If \[p^2(z)+\frac{\beta zp'(z)}{p(z)}\prec 1+z, \] then \[ p(z)\prec\sqrt{1+z}.  \]
\end{lemma}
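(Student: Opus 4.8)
The plan is to follow the template of the preceding lemmas and apply Theorem~\ref{lem thm} with the choice $q(z)=\sqrt{1+z}$. First I would let $h(z)=1+z$, so that $\Omega=h(\mathbb{D})=\{w:|w-1|<1\}$, and define $\psi:(\mathbb{C}\setminus\{0\})\times\mathbb{C}\times\mathbb{D}\to\mathbb{C}$ by $\psi(a,b;z)=a^2+\beta b/a$. With this choice the hypothesis reads $\psi(p(z),zp'(z);z)\prec h(z)$, so by Theorem~\ref{lem thm} it suffices to verify that $\psi\in\Psi[\Omega,\mathcal{L}]$; that is, that $\psi(r,s;z)\notin\Omega$ whenever $r=\sqrt{2\cos2\theta}\,e^{i\theta}$ and $s=me^{3i\theta}/(2\sqrt{2\cos2\theta})$ with $-\pi/4<\theta<\pi/4$ and $m\geq1$, in the notation of \eqref{adm for q}.

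Substituting these values gives
\[
\psi(r,s;z)=2\cos2\theta\,e^{2i\theta}+\frac{\beta m e^{2i\theta}}{4\cos2\theta}.
\]
The computation is lighter than in the earlier lemmas because of the identity $2\cos2\theta\,e^{2i\theta}-1=e^{4i\theta}$ (equivalently $2\cos^2 2\theta-1=\cos4\theta$ and $2\cos2\theta\sin2\theta=\sin4\theta$), which yields $\psi(r,s;z)-1=e^{2i\theta}\bigl(e^{2i\theta}+\tfrac{\beta m}{4\cos2\theta}\bigr)$. Taking squared moduli,
\[
|\psi(r,s;z)-1|^2=\Bigl(\cos2\theta+\frac{\beta m}{4\cos2\theta}\Bigr)^2+\sin^2 2\theta=1+\frac{\beta m}{2}+\frac{\beta^2 m^2}{16}\sec^2 2\theta.
\]
Since $\sec^2 2\theta\geq1$ on $(-\pi/4,\pi/4)$ and $\beta>0$, $m\geq1$, this is at least $1+\beta/2+\beta^2/16>1$, so $\psi(r,s;z)\notin\Omega$ for every admissible $\theta$ and $m$. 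Hence $\psi\in\Psi[\Omega,\mathcal{L}]$, and the implication $p(z)\prec\sqrt{1+z}$ follows from Theorem~\ref{lem thm}.

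I do not expect a real obstacle here. Unlike the earlier lemmas, there is no genuine one-variable optimization to carry out: the only $\theta$-dependent term is $\tfrac{\beta^2 m^2}{16}\sec^2 2\theta$, which is manifestly minimized at $\theta=0$, so no second-derivative test is needed and no numerical constant $\beta_0$ appears. The single point requiring a moment's care is the trigonometric collapse $2\cos2\theta\,e^{2i\theta}-1=e^{4i\theta}$ that reduces $|\psi(r,s;z)-1|^2$ to its clean closed form.
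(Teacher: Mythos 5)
Your proposal is correct and follows essentially the same route as the paper: the same admissible function $\psi(a,b;z)=a^2+\beta b/a$, the same $\Omega=\{w:|w-1|<1\}$, and the identical closed form $|\psi(r,s;z)-1|^2=1+\tfrac{\beta m}{2}+\tfrac{\beta^2m^2}{16}\sec^2 2\theta\geq 1+\tfrac{\beta}{2}+\tfrac{\beta^2}{16}>1$. The identity $2\cos 2\theta\,e^{2i\theta}=1+e^{4i\theta}$ is just a tidy way of organizing the algebra the paper leaves implicit.
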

\begin{proof}
	Let $h$ be the analytic function defined on $\mathbb{D}$ by $h(z)=1+z$ and let $\Omega=h(\mathbb{D})=\{w:|w-1|<1\}.$ Let $\psi:(\mathbb{C}\setminus\{0\})\times\mathbb{C}\times\mathbb{D}\to\mathbb{C}$ be defined by $\psi(a,b;z)=a^2+\beta b/a.$ For $\psi$ to be in $\Psi[\Omega,\mathcal{L}],$ we must have $\psi(r,s;z)\not\in\Omega$ for 
	$z\in\mathbb{D}.$ Then, $\psi(r,s;z)$ is given by
	\begin{align*}
	\psi(r,s;z)&=2\cos2\theta e^{2i\theta}+\frac{\beta me^{2i\theta}}{4\cos2\theta},
	\intertext{and so}
	|\psi(r,s;z)-1|^2&
	=1+\frac{\beta^2m^2}{16\cos^2 2\theta}+\frac{\beta m}{2}
	\geq1+\frac{\beta^2m^2}{16}+\frac{\beta m}{2}\\
	&\geq1+\frac{\beta^2}{16}+\frac{\beta}{2}> 1.
	\end{align*}
	Hence for $\beta>0,\ \psi\in\Psi[\Omega,\mathcal{L}]$ and therefore, for $p(z)\in\mathcal{H}_1,$ if \[p^2(z)+\frac{\beta zp'(z)}{p(z)}\prec 1+z, \] then \[ p(z)\prec\sqrt{1+z}. \qedhere \]
\end{proof}
\begin{lemma}
	Let $\beta_0=2\sqrt{2}.$ Let $p$ be analytic in $\mathbb{D}$ with $p(0)=1.$ If \[p^2(z)+\frac{\beta zp'(z)}{p^2(z)}\prec 1+z \ (\beta>\beta_0), \] then \[ p(z)\prec\sqrt{1+z}.  \]
\end{lemma}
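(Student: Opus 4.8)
The plan is to follow the template of the preceding lemmas and invoke the first order reduction of Theorem \ref{lem thm}. Set $h(z)=1+z$, so that $\Omega:=h(\mathbb{D})=\{w:|w-1|<1\}$, and define $\psi:(\mathbb{C}\setminus\{0\})\times\mathbb{C}\times\mathbb{D}\to\mathbb{C}$ by $\psi(a,b;z)=a^{2}+\beta b/a^{2}$. It suffices to check that $\psi\in\Psi[\Omega,\mathcal{L}]$, i.e. that $\psi(r,s;z)\notin\Omega$ for $r=\sqrt{2\cos2\theta}\,e^{i\theta}$ and $s=me^{3i\theta}/(2\sqrt{2\cos2\theta})$ as in \eqref{adm for q}, with $m\geq1$ and $-\pi/4<\theta<\pi/4$. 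Since $r^{2}=2\cos2\theta\,e^{2i\theta}$ and $s/r^{2}=\dfrac{m}{4\sqrt2}(\sec2\theta)^{3/2}e^{i\theta}$, one gets
\[\psi(r,s;z)=2\cos2\theta\,e^{2i\theta}+\frac{\beta m}{4\sqrt2}(\sec2\theta)^{3/2}e^{i\theta}.\]

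First I would record the identity $|2\cos2\theta\,e^{2i\theta}-1|=1$ (indeed $|2\cos2\theta\,e^{2i\theta}-1|^{2}=4\cos^{2}2\theta-4\cos^{2}2\theta+1=1$; equivalently, $2\cos2\theta\,e^{2i\theta}-1$ is the boundary point $\zeta$ appearing in \eqref{adm for q}, which satisfies $|\zeta|=1$). Writing $\psi(r,s;z)-1=(2\cos2\theta\,e^{2i\theta}-1)+\tfrac{\beta m}{4\sqrt2}(\sec2\theta)^{3/2}e^{i\theta}$, squaring, and using $\operatorname{Re}\bigl[(2\cos2\theta\,e^{2i\theta}-1)e^{-i\theta}\bigr]=(2\cos2\theta-1)\cos\theta$, this gives
\[g(\theta):=|\psi(r,s;z)-1|^{2}=1+\frac{\beta^{2}m^{2}}{32}\sec^{3}2\theta+\frac{\beta m}{2\sqrt2}(2\cos2\theta-1)(\sec2\theta)^{3/2}\cos\theta.\]
What remains is to show $g(\theta)\geq1$ on $(-\pi/4,\pi/4)$; the last term is negative precisely on $\pi/6<|\theta|<\pi/4$, which is why a lower bound on $\beta$ is needed.

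To find $\min g$, I would pass to the variable $u=2\cos2\theta\in(0,2]$, for which $\cos\theta=\tfrac12\sqrt{2+u}$ and
\[g=1+\frac{\beta^{2}m^{2}}{4u^{3}}+\frac{\beta m\,(u-1)\sqrt{2+u}}{2u^{3/2}}.\]
The useful computation is that $\dfrac{d}{du}\bigl[(u-1)(2+u)^{1/2}u^{-3/2}\bigr]=3u^{-5/2}(2+u)^{-1/2}$, whence $dg/du<0$ exactly when $\beta^{2}m^{2}(2+u)>4u^{3}$. As $u\mapsto 4u^{3}/(2+u)$ is increasing on $(0,2)$ with supremum $8$, the hypothesis $\beta>\beta_{0}=2\sqrt2$ (so $\beta m>2\sqrt2$, using $m\geq1$) forces $dg/du<0$ throughout $(0,2)$. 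Hence $g$ decreases in $u$ on $(0,2]$, so $\min_{-\pi/4<\theta<\pi/4}g(\theta)=g(0)=1+\dfrac{\beta^{2}m^{2}}{32}+\dfrac{\beta m}{2\sqrt2}\geq 1+\dfrac{\beta^{2}}{32}+\dfrac{\beta}{2\sqrt2}>1$. Therefore $\psi(r,s;z)\notin\Omega$, i.e. $\psi\in\Psi[\Omega,\mathcal{L}]$, and the first order reduction of Theorem \ref{lem thm}, applied to $\psi(p(z),zp'(z);z)\prec1+z$, yields $p(z)\prec\sqrt{1+z}$.

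The only genuine obstacle is the minimum-location step, and the constant $2\sqrt2$ is exactly the threshold that makes it work: one can carry this out directly as a second derivative test on $g(\theta)$, but the substitution $u=2\cos2\theta$ is cleaner and reduces everything to the monotonicity of $u\mapsto 4u^{3}/(2+u)$. It is also worth noting that the hypothesis $p^{2}(z)+\beta zp'(z)/p^{2}(z)\prec1+z$ forces $p$ to be non-vanishing in $\mathbb{D}$ (a zero would make the left-hand side have a pole), so $(p(z),zp'(z);z)$ indeed stays in the domain of $\psi$.
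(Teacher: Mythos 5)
Your proposal is correct and follows the same route as the paper: same admissible function $\psi(a,b;z)=a^{2}+\beta b/a^{2}$, same target $\Omega=\{w:|w-1|<1\}$, and the same quantity $g(\theta)=|\psi(r,s;z)-1|^{2}=1+\tfrac{\beta^{2}m^{2}}{32}\sec^{3}2\theta+\tfrac{\beta m}{2\sqrt{2}}\cos 3\theta\,\sec^{3/2}2\theta$ (your cross term agrees with the paper's via $\cos3\theta=(2\cos2\theta-1)\cos\theta$). The one place you genuinely improve on the paper is the minimization: the paper merely asserts via a ``second derivative test'' that the minimum of $g$ occurs at $\theta=0$ when $\beta m>2\sqrt{2}$, which by itself only certifies a local minimum, whereas your substitution $u=2\cos2\theta$ together with the identity $\frac{d}{du}\bigl[(u-1)(2+u)^{1/2}u^{-3/2}\bigr]=3u^{-5/2}(2+u)^{-1/2}$ reduces the claim to the monotonicity of $u\mapsto 4u^{3}/(2+u)$ and gives a clean global argument that $g$ is decreasing in $u$ on $(0,2]$; I checked the computation and it is correct. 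Your closing remark that the hypothesis forces $p$ to be nonvanishing (so that $(p(z),zp'(z);z)$ stays in the domain of $\psi$) addresses a point the paper leaves implicit.
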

\begin{proof}
	Let $h$ be the analytic function defined on $\mathbb{D}$ by $h(z)=1+z$ and let $\Omega=h(\mathbb{D})=\{w:|w-1|<1\}.$ Let $\psi:(\mathbb{C}\setminus\{0\})\times\mathbb{C}\times\mathbb{D}\to\mathbb{C}$ be defined by $\psi(a,b;z)=a^2+\beta b/a^2.$ For $\psi$ to be in $\Psi[\Omega,\mathcal{L}],$ we must have $\psi(r,s;z)\not\in\Omega$ for 
	$z\in\mathbb{D}.$ Then, $\psi(r,s;z)$ is given by
	\begin{align*}
	\psi(r,s;z)&=2\cos2\theta e^{2i\theta}+\frac{\beta me^{i\theta}}{4\sqrt{2}\cos^{3/2}2\theta},
	\intertext{and so}
	|\psi(r,s;z)-1|^2
	&=1+\frac{\beta^2m^2}{32\cos^3 2\theta}+\frac{\beta m\cos3\theta}{2\sqrt{2}\cos^{3/2} 2\theta}=:g(\theta)
	\end{align*}
	It is clear using the second derivative test that for $\beta m>2\sqrt{2},$ minimum of $g$ occurs at $\theta=0.$
	For $\beta>2\sqrt{2},\ \beta m> 2\sqrt{2}$ which implies that minimum of $ g(\theta)$ is attained at $\theta=0$ for $\beta>\beta_0.$ Hence
	\[	\min g(\theta)=1+\frac{\beta^2m^2}{32}+\frac{\beta m}{2\sqrt{2}}\geq1+\frac{\beta^2}{32}+\frac{\beta}{2\sqrt{2}}>1.\]
	Hence for $\beta>\beta_0,\ \psi\in\Psi[\Omega,\mathcal{L}]$ and therefore, for $p(z)\in\mathcal{H}_1,$ if \[p^2(z)+\frac{\beta zp'(z)}{p^2(z)}\prec 1+z\ (\beta>\beta_0), \] then \[ p(z)\prec\sqrt{1+z}. \qedhere \]
\end{proof}
Next result depicts some sufficient conditions so that $p(z)\prec\sqrt{1+z}$ whenever $p^2(z)+\beta zp'(z)p(z)\prec(2+z)/(2-z).$
\begin{lemma}
	Let $\beta_0=2$ and $p$ be analytic in $\mathbb{D}$ with $p(0)=1.$ If \[ p^2(z)+\beta zp'(z)p(z)\prec \frac{2+z}{2-z}\ \ (\beta\geq\beta_0),  \] then \[ p(z)\prec\sqrt{1+z}.  \] The lower bound $\beta_0$ is best possible.
\end{lemma}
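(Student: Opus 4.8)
The plan is to recast the hypothesis as a first-order differential subordination and to apply the $\Omega=h(\mathbb{D})$ form of Theorem~\ref{lem thm}. First I would set $h(z)=(2+z)/(2-z)$. Being a Möbius transformation with $h(0)=1$, $h(1)=3$ and $h(-1)=1/3$, it carries $\mathbb{D}$ onto the disk $\Omega:=h(\mathbb{D})=\{w:|w-5/3|<4/3\}$, and $h(0)=1=\psi(1,0;0)$ for the choice $\psi(a,b;z):=a^{2}+\beta ab$. With this $\psi$ the hypothesis is precisely $\psi(p(z),zp'(z);z)\prec h(z)$, so it suffices to prove $\psi\in\Psi[\Omega,\mathcal{L}]$; that is, $\psi(r,s;z)\notin\Omega$ whenever $r=\sqrt{2\cos2\theta}\,e^{i\theta}$, $s=me^{3i\theta}/(2\sqrt{2\cos2\theta})$, $m\geq1$ and $\theta\in(-\pi/4,\pi/4)$.

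The computation is short once one exploits the following identity. For these $r,s$ one has $r^{2}=2\cos2\theta\,e^{2i\theta}$ and $rs=(m/2)e^{4i\theta}$, while the preimage $\zeta=2\cos2\theta\,e^{2i\theta}-1=r^{2}-1$ satisfies $|\zeta|=1$ and in fact $\zeta=e^{4i\theta}$ (writing $\zeta=e^{i\alpha}$, the relation $\zeta+1=2\cos(\alpha/2)e^{i\alpha/2}=2\cos2\theta\,e^{2i\theta}$ forces $\alpha=4\theta$). Hence
\[ \psi(r,s;z)=r^{2}+\beta rs=1+\zeta+\frac{\beta m}{2}\zeta=1+\Bigl(1+\frac{\beta m}{2}\Bigr)\zeta, \]
so $|\psi(r,s;z)-1|=1+\beta m/2$, and the reverse triangle inequality gives
\[ \Bigl|\psi(r,s;z)-\frac{5}{3}\Bigr|\ \geq\ \Bigl(1+\frac{\beta m}{2}\Bigr)-\frac{2}{3}\ =\ \frac{1}{3}+\frac{\beta m}{2}\ \geq\ \frac{1}{3}+\frac{\beta}{2}\ \geq\ \frac{4}{3}, \]
using $m\geq1$ and $\beta\geq2$. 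Thus $\psi(r,s;z)\notin\Omega$, so $\psi\in\Psi[\Omega,\mathcal{L}]$, and Theorem~\ref{lem thm} (in its first-order, $\Omega=h(\mathbb{D})$ form) yields $p(z)\prec\sqrt{1+z}$; the case $p\equiv1$ is trivial.

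For the optimality of $\beta_{0}=2$, note that the displayed chain is an equality precisely when $\theta=0$ (so $\zeta=1$), $m=1$ and $\beta=2$; in particular, for $\beta<2$ the admissibility condition already fails at $\theta=0$, $m=1$, where $\psi(r,s;z)=2+\beta/2\in\Omega$. To upgrade this to a genuine failure of the implication for $\beta<\beta_{0}$, I would pass to $P:=p^{2}$, turning the hypothesis into the linear subordination $P(z)+(\beta/2)zP'(z)\prec h(z)$, and take $P=Q_{\beta}$ to be its solution, given explicitly by
\[ Q_{\beta}(z)=\frac{2}{\beta}\,z^{-2/\beta}\int_{0}^{z}t^{2/\beta-1}h(t)\,dt=\int_{0}^{1}h\bigl(zu^{\beta/2}\bigr)\,du, \]
which is univalent (indeed convex); since $\operatorname{Re}h>0$ on $\mathbb{D}$ forces $\operatorname{Re}Q_{\beta}>0$, the branch $p:=\sqrt{Q_{\beta}}$ is a legitimate analytic function with $p(0)=1$ and $p^{2}+\beta zp'p=h\prec h$. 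It then remains to verify that $Q_{\beta}(\mathbb{D})\not\subseteq\{w:|w-1|<1\}$ — equivalently $p\not\prec\sqrt{1+z}$ — exactly when $\beta<\beta_{0}$, by controlling $\max_{|z|=1}|Q_{\beta}(z)-1|$ as a function of $\beta$. This last estimate is the only genuinely delicate point; everything before it is contained in the two displayed lines above.
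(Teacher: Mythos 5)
Your proof of the implication is correct and follows the paper's approach in essence: same admissible function $\psi(a,b;z)=a^{2}+\beta ab$, same $\Omega=h(\mathbb{D})$, same appeal to the first-order form of the admissibility theorem. The difference is only in how admissibility is verified. The paper keeps $\Omega$ in the form $\{w:|2(w-1)/(w+1)|<1\}$, computes $g(\theta)=|2(\psi-1)/(\psi+1)|^{2}$, and has to argue (via an unelaborated ``second derivative test'') that the minimum over $\theta$ occurs at $\theta=0$ before checking $\min g\geq 1$ reduces to $3(1+\beta/2)^{2}-4(1+\beta/2)-4\geq0$, i.e.\ $\beta\geq2$. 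You instead identify $\Omega$ as the Euclidean disk $|w-5/3|<4/3$, observe the key simplification $\psi(r,s;z)=1+(1+\beta m/2)e^{4i\theta}$ (your identity $\zeta=e^{4i\theta}$, $rs=(m/2)e^{4i\theta}$ is correct), and dispatch admissibility with one reverse triangle inequality, $|\psi-5/3|\geq 1/3+\beta m/2\geq 4/3$. This eliminates the $\theta$-minimization entirely and is cleaner than the paper's computation; both give exactly the threshold $\beta\geq2$.

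On sharpness: your proposal does not complete the proof that $\beta_{0}=2$ is best possible --- you reduce it to showing $Q_{\beta}(\mathbb{D})\not\subseteq\{|w-1|<1\}$ for $\beta<2$, where $Q_{\beta}$ solves $P+(\beta/2)zP'=h$, and you explicitly leave that estimate open. That is a genuine gap in your write-up. It is worth saying, though, that the paper's own proof fares no better: it only shows that the admissibility inequality becomes an equality at $\beta=2$, $m=1$, $\theta=0$, which proves that \emph{this method} cannot go below $2$, not that the implication actually fails for $\beta<2$. Your proposed extremal construction via the explicit solution of the linear ODE for $P=p^{2}$ is the right way to turn the claimed sharpness into a theorem, but as written it remains a programme rather than a proof.
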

\begin{proof}
	Let $\beta>0.$ Let $h$ be the analytic function defined on $\mathbb{D}$ by $h(z)=(2+z)/(2-z)$ and let $\Omega=h(\mathbb{D})=\{w:|2(w-1)/(w+1)|<1\}.$ Let $\psi:\mathbb{C}^2\times\mathbb{D}\to\mathbb{C}$ be defined by $\psi(a,b;z)=a^2+\beta ab.$ For $\psi$ to be in $\Psi[\Omega,\mathcal{L}],$ we must have $\psi(r,s;z)\not\in\Omega$ for
	$z\in\mathbb{D}.$ Then, $\psi(r,s;z)$ is given by
	\begin{align*}
	\psi(r,s;z)&=2\cos2\theta e^{2i\theta}+\frac{\beta me^{4i\theta}}{2}
	\intertext{then}
	\left|\frac{2(\psi(r,s;z)-1)}{\psi(r,s;z)+1}\right|^2&=\frac{4(1+m\beta/2)^2}{(1+\beta m/2)^2+4+4(1+\beta m/2)\cos4\theta}=:g(\theta)
	\end{align*}
	Using the second derivative test, one can verify that minimum of $g$ occurs at $\theta=0.$
	Thus \[\min g(\theta)=\frac{4(1+\beta m/2)^2}{(1+\beta m/2)^2+4(1+\beta m/2)+4} . \]
	Now, the inequality
	\begin{align*}
	&\frac{4(1+\beta/2)^2}{(1+\beta/2)^2+4(1+\beta/2)+4}\geq 1
	\intertext{holds if}	
	&3\left(1+\frac{\beta}{2}\right)^2-4-4\left(1+\frac{\beta}{2}\right)\geq 0
	\end{align*}
	or equivalently if      $\beta\geq2.$
	
	Since, $m\geq1,\ \beta m\geq2$ implies that \[\frac{4(1+\beta m/2)^2}{(1+\beta m/2)^2+4(1+\beta m/2)+4}\geq1\] and therefore $\left|\dfrac{2(\psi(r,s;z)-1)}{\psi(r,s;z)+1}\right|^2\geq 1$.
	Hence, for $\beta\geq\beta_0,\ \psi\in\Psi[\Omega,\mathcal{L}]$ and for $p\in\mathcal{H}_1,$ if  \[p^2(z)+\beta zp'(z)p(z)\prec \frac{2+z}{2-z}\ (\beta\geq\beta_0),\] then \[p(z)\prec\sqrt{1+z}. \qedhere \]
\end{proof}
\begin{remark}
	All of the above lemmas give a sufficient condition for $f$ in $\mathcal{A}$ to be lemniscate starlike. This can be seen by defining a function $p:\mathbb{D}\to\mathbb{C}$ by $p(z)=zf'(z)/f(z).$ 
\end{remark}
\section{Second Order Differential Subordinations}\label{sods}
This section deals with the case that if there is an analytic function $p$ such that $p(0)=1$ satisfying a second order differential subordination then $p(z)$ is \textit{subordinate} to $\sqrt{1+z}.$ Now, for $r,s,t$ as in \eqref{adm for q}, we have $\displaystyle{\operatorname{Re}\left(\frac{t}{s}+1\right)\geq \frac{3m}{4}}$ for $m\geq n\geq 1.$ On simplyfying,
\begin{align}
\operatorname{Re}(te^{-3i\theta})&\geq \frac{m(3m-4)}{8\sqrt{2\cos2\theta}}.\label{re t}
\intertext{If $m\geq2,$ then}\operatorname{Re}(te^{-3i\theta})
&\geq \frac{1}{2\sqrt{2\cos2\theta}}\geq \frac{1}{2\sqrt{2}}.\nonumber
\end{align}
\begin{lemma}\label{lem5.1}
	Let $p$ be analytic in $\mathbb{D}$ such that $p(0)=1.$ If \[ zp'(z)+z^2p''(z)\prec\frac{3z}{8\sqrt{2}},  \] then \[ p(z)\prec\sqrt{1+z}.  \]
\end{lemma}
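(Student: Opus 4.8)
The plan is to deduce this directly from Theorem~\ref{lem thm} together with the admissibility data recorded in \eqref{adm for q}. Take $h(z)=3z/(8\sqrt2)$, so that $\Omega:=h(\mathbb{D})=\{w:|w|<3/(8\sqrt2)\}$, and define $\psi\colon\mathbb{C}^3\times\mathbb{D}\to\mathbb{C}$ by $\psi(a,b,c;z)=b+c$. Then $\psi(p(z),zp'(z),z^2p''(z);z)=zp'(z)+z^2p''(z)$, the domain of $\psi$ is all of $\mathbb{C}^3\times\mathbb{D}$, and the hypothesis of the lemma reads exactly $\psi(p(z),zp'(z),z^2p''(z);z)\prec h(z)$. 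By the $\Omega=h(\mathbb{D})$ formulation of Theorem~\ref{lem thm}, it therefore suffices to check that $\psi\in\Psi_n[\Omega,\mathcal{L}]$ for every $n\geq1$; that is, to show $s+t\notin\Omega$, i.e.\ $|s+t|\geq 3/(8\sqrt2)$, whenever $r,s,t$ are as in \eqref{adm for q} with $m\geq n\geq1$ and $\theta\in(-\pi/4,\pi/4)$. (The degenerate case $p\equiv1$ is immediate, since then $1\prec\sqrt{1+z}$.)

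To establish this lower bound I would project $s+t$ onto the unit vector $e^{3i\theta}$, which is the natural direction because $s$ itself points along it: from \eqref{adm for q}, $se^{-3i\theta}=m/(2\sqrt{2\cos2\theta})$ is real and positive, so $\operatorname{Re}(se^{-3i\theta})=m/(2\sqrt{2\cos2\theta})$. For the $t$-term, the admissibility inequality $\operatorname{Re}(t/s+1)\geq 3m/4$ has already been rewritten in \eqref{re t} as $\operatorname{Re}(te^{-3i\theta})\geq m(3m-4)/(8\sqrt{2\cos2\theta})$. Adding the two estimates gives
\[
\operatorname{Re}\bigl((s+t)e^{-3i\theta}\bigr)\ \geq\ \frac{m}{2\sqrt{2\cos2\theta}}+\frac{m(3m-4)}{8\sqrt{2\cos2\theta}}\ =\ \frac{3m^2}{8\sqrt{2\cos2\theta}}.
\]
Since $|s+t|\geq\operatorname{Re}((s+t)e^{-3i\theta})$, and $\cos2\theta\leq1$ on $(-\pi/4,\pi/4)$ while $m\geq1$, we obtain $|s+t|\geq 3m^2/(8\sqrt2)\geq 3/(8\sqrt2)$. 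Hence $s+t\notin\Omega$, so $\psi\in\Psi_n[\Omega,\mathcal{L}]$, and Theorem~\ref{lem thm} yields $p(z)\prec\sqrt{1+z}$.

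There is no genuinely hard step: the one inequality doing the work, \eqref{re t}, is already supplied in the opening of this section, and the remaining content is simply the observation that projecting onto $e^{3i\theta}$ simultaneously turns the $s$-contribution into a positive real number and activates the bound on $\operatorname{Re}(te^{-3i\theta})$. The only mild point to keep in mind is that the estimate must be uniform over all $m\geq n\geq1$, not merely $m=1$; this causes no trouble since $3m^2/(8\sqrt2)$ is increasing in $m$, so the worst case is $m=1$. One could also note that equality in the chain occurs formally at $m=1,\ \theta=0$, which suggests the constant $3/(8\sqrt2)$ is sharp, but exhibiting an extremal $p$ to confirm sharpness is a separate task I would not undertake here.
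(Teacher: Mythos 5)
Your proof is correct and follows essentially the same route as the paper: the paper also takes $\psi(a,b,c;z)=b+c$ with $\Omega=\{w:|w|<3/(8\sqrt2)\}$ and bounds $|s+t|$ below by $3m^2/(8\sqrt{2\cos2\theta})$ using the inequality \eqref{re t}. Your write-up merely makes explicit the projection onto $e^{3i\theta}$ that the paper leaves implicit.
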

\begin{proof}
	Let $h(z)=3z/(8\sqrt{2}),$ then $\Omega=h(\mathbb{D})=\{w:|w|<3/(8\sqrt{2})\}$ and let $\psi:\mathbb{C}^3\times\mathbb{D}\to\mathbb{C}$ be defined by $\psi(a,b,c;z)=b+c.$ For $\psi$ to be in $\Psi[\Omega,\mathcal{L}],$ we must have $\psi(r,s,t;z)\not\in\Omega$ for 
	$z\in\mathbb{D}.$	Then, $\psi(r,s,t;z)$ is given by
	\begin{align*}
	\psi(r,s,t;z)&=\frac{me^{3i\theta}}{2\sqrt{2\cos2\theta}}+t.
	\intertext{So, we have that}
	|\psi(r,s,t;z)|&=\left|\frac{m}{2\sqrt{2\cos2\theta}}+te^{-3i\theta}\right|\geq\frac{3m^2}{8\sqrt{2\cos2\theta}}.
	\intertext{Since $m\geq1,$ so}
	|\psi(r,s,t;z)|&\geq \frac{3}{8\sqrt{2\cos2\theta}} \geq \frac{3}{8\sqrt{2}}.
	\end{align*}
	Therefore, $\psi\in\Psi[\Omega,\mathcal{L}].$ Hence, for $p\in\mathcal{H}_1$ if \[ zp'(z)+z^2p''(z)\prec\frac{3z}{8\sqrt{2}},  \] then \[ p(z)\prec\sqrt{1+z}. \qedhere \]
\end{proof}
We obtain the following theorem by taking $p(z)=zf'(z)/f(z)$ in Lemma \ref{lem5.1}, where $p$ is analytic in $\mathbb{D}$ and $p(0)=1.$
\begin{theorem}
	Let $f$ be a function in $\mathcal{A}.$ If $f$ satisfies the subordination
	\begin{align*}
	&\frac{zf'(z)}{f(z)}\left(1+\frac{zf''(z)}{f'(z)}-\frac{zf'(z)}{f(z)}\right)+\frac{zf'(z)}{f(z)}\Bigg(\frac{z^2f'''(z)}{f'(z)}-\frac{3z^2f''(z)}{f(z)}\\
	&\quad{}+\frac{2zf''(z)}{f'(z)}+2\left(\frac{zf'(z)}{f(z)}\right)^2-\frac{2zf'(z)}{f(z)}\Bigg)\prec\frac{3z}{8\sqrt{2}} ,
	\end{align*}
	then $f\in\mathcal{SL}.$
\end{theorem}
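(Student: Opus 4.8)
The plan is to observe that the left-hand side of the displayed subordination is precisely $zp'(z)+z^2p''(z)$ for the choice $p(z)=zf'(z)/f(z)$, so that the theorem is an immediate corollary of Lemma~\ref{lem5.1}.

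First I would set $p(z)=zf'(z)/f(z)$. For $f\in\mathcal{A}$ this is analytic near the origin with $p(0)=1$, and (as in the remark following the first-order lemmas) we regard $p$ as analytic on $\mathbb{D}$; note moreover that the hypothesis, whose right-hand side is bounded, forces $f$ to be zero-free on $\mathbb{D}\setminus\{0\}$, so this causes no trouble. Logarithmic differentiation gives
\[
\frac{p'(z)}{p(z)}=\frac{1}{z}+\frac{f''(z)}{f'(z)}-\frac{f'(z)}{f(z)},
\]
and hence
\[
zp'(z)=p(z)\left(1+\frac{zf''(z)}{f'(z)}-\frac{zf'(z)}{f(z)}\right),
\]
which is exactly the first summand in the statement.

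Next I would compute the second-order term. Writing $L=p'/p$, so that $p''=pL^2+pL'$, one gets $z^2p''(z)=p(z)\bigl[(zL(z))^2+z^2L'(z)\bigr]$, where $zL(z)=1+zf''(z)/f'(z)-zf'(z)/f(z)$ and
\[
z^2L'(z)=-1+\frac{z^2f'''(z)}{f'(z)}-\left(\frac{zf''(z)}{f'(z)}\right)^2-\frac{z^2f''(z)}{f(z)}+\left(\frac{zf'(z)}{f(z)}\right)^2.
\]
Introducing the abbreviations $a=zf''(z)/f'(z)$ and $b=zf'(z)/f(z)$ and using the factorization $z^2f''(z)/f(z)=ab$, one expands $(1+a-b)^2+(-1+z^2f'''(z)/f'(z)-a^2-ab+b^2)$; the $1$ and the $a^2$ terms cancel and the bracket collapses to
\[
\frac{z^2f'''(z)}{f'(z)}-\frac{3z^2f''(z)}{f(z)}+\frac{2zf''(z)}{f'(z)}+2\left(\frac{zf'(z)}{f(z)}\right)^2-\frac{2zf'(z)}{f(z)},
\]
which is precisely the second parenthetical factor. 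Adding the two contributions shows the hypothesis is exactly $zp'(z)+z^2p''(z)\prec 3z/(8\sqrt{2})$.

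Finally, Lemma~\ref{lem5.1} applies verbatim and yields $p(z)\prec\sqrt{1+z}$, that is, $zf'(z)/f(z)\prec\sqrt{1+z}$, which is the definition of $f\in\mathcal{SL}$. The argument is entirely routine; the only point needing a little care is the expansion of $z^2p''(z)$, and in particular recognizing the identity $z^2f''(z)/f(z)=(zf''(z)/f'(z))(zf'(z)/f(z))$, which is what makes the quadratic terms simplify into the stated shape.
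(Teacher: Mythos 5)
Your proof is correct and is exactly the paper's argument: the paper derives this theorem by substituting $p(z)=zf'(z)/f(z)$ into Lemma~\ref{lem5.1}, and your algebraic verification that $zp'(z)+z^2p''(z)$ equals the displayed expression (via $z^2f''(z)/f(z)=(zf''(z)/f'(z))(zf'(z)/f(z))$) checks out.
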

\begin{lemma}\label{lem5.3}
	Let $p$ be analytic in $\mathbb{D}$ such that $p(0)=1$ and let $p\in\mathcal{H}[1,2].$ If \[ p^2(z)+zp'(z)+z^2p''(z)\prec1+\left(1+\frac{3}{2\sqrt{2}}\right)z,  \] then \[ p(z)\prec\sqrt{1+z}.  \]
\end{lemma}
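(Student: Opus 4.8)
The plan is to apply Theorem~\ref{lem thm} in its second-order form, following the template of the preceding lemmas in this section. Put $h(z)=1+\bigl(1+\tfrac{3}{2\sqrt2}\bigr)z$, so that $\Omega:=h(\mathbb{D})=\{w:|w-1|<1+\tfrac{3}{2\sqrt2}\}$, and define $\psi\colon\mathbb{C}^3\times\mathbb{D}\to\mathbb{C}$ by $\psi(a,b,c;z)=a^2+b+c$; then the hypothesis reads $\psi(p(z),zp'(z),z^2p''(z);z)\prec h(z)$. Since $p\in\mathcal{H}[1,2]$, the relevant admissibility class is $\Psi_2[\Omega,\mathcal{L}]$ (that is, $n=2$), so I would reduce the lemma to verifying $\psi\in\Psi_2[\Omega,\mathcal{L}]$ and then invoke \eqref{adm h}.

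For the admissibility check, with $r=\sqrt{2\cos2\theta}\,e^{i\theta}$ and $s=me^{3i\theta}/(2\sqrt{2\cos2\theta})$ one has $\psi(r,s,t;z)=2\cos2\theta\,e^{2i\theta}+\dfrac{me^{3i\theta}}{2\sqrt{2\cos2\theta}}+t$, where $m\ge n=2$. Since the only information available on $t$ is the constraint $\operatorname{Re}(t/s+1)\ge 3m/4$, I would not expand $|\psi(r,s,t;z)-1|^2$ directly but instead estimate $|\psi(r,s,t;z)-1|\ge\operatorname{Re}\bigl((\psi(r,s,t;z)-1)e^{-3i\theta}\bigr)$, exactly the device used in the proof of Lemma~\ref{lem5.1}. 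A product-to-sum identity gives $\operatorname{Re}\bigl((2\cos2\theta\,e^{2i\theta}-1)e^{-3i\theta}\bigr)=2\cos2\theta\cos\theta-\cos3\theta=\cos\theta$; also $\operatorname{Re}(se^{-3i\theta})=m/(2\sqrt{2\cos2\theta})$; and \eqref{re t} gives $\operatorname{Re}(te^{-3i\theta})\ge m(3m-4)/(8\sqrt{2\cos2\theta})$. Adding these three and using $\tfrac{m}{2\sqrt{2\cos2\theta}}+\tfrac{m(3m-4)}{8\sqrt{2\cos2\theta}}=\tfrac{3m^2}{8\sqrt{2\cos2\theta}}$ yields
\[
|\psi(r,s,t;z)-1|\ \ge\ \cos\theta+\frac{3m^2}{8\sqrt{2\cos2\theta}}=:g(\theta).
\]

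It then remains to prove $g(\theta)\ge 1+\tfrac{3}{2\sqrt2}$ on $(-\pi/4,\pi/4)$ for every $m\ge 2$. The function $g$ is even and tends to $\infty$ as $\theta\to\pm\pi/4$, and a second-derivative test at $\theta=0$ (as in the companion lemmas) shows the minimum sits there once $m\ge 2$: the $\theta^2$-coefficient in the expansion of $g$ at $0$ is $\tfrac{3m^2}{8\sqrt2}-\tfrac12$, which is positive for $m\ge 2$, and in fact $g'(\theta)=\sin\theta\bigl(\tfrac{3m^2\cos\theta}{2(2\cos2\theta)^{3/2}}-1\bigr)>0$ on $(0,\pi/4)$ because $\cos\theta/(2\cos2\theta)^{3/2}$ is increasing there. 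Hence $\min g=g(0)=1+\tfrac{3m^2}{8\sqrt2}\ge 1+\tfrac{3}{2\sqrt2}$, so $\psi(r,s,t;z)\notin\Omega$, i.e. $\psi\in\Psi_2[\Omega,\mathcal{L}]$; Theorem~\ref{lem thm} then gives $p(z)\prec\sqrt{1+z}$. The only genuinely delicate point is confirming that the minimum of $g$ is attained at $\theta=0$ for all $m\ge 2$ rather than merely at $m=2$; everything else is the routine substitute-and-estimate pattern of this section, and this is also where the sharp-looking constant $1+\tfrac{3}{2\sqrt2}$ in the statement comes from.
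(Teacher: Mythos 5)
Your proposal is correct and follows essentially the same route as the paper: the same $\psi(a,b,c;z)=a^2+b+c$ and $\Omega$, the same reduction $|\psi(r,s,t;z)-1|\ge\operatorname{Re}\bigl((\psi(r,s,t;z)-1)e^{-3i\theta}\bigr)=\cos\theta+\tfrac{3m^2}{8\sqrt{2}}\sec^{1/2}2\theta=g(\theta)$ via \eqref{re t}, and the same minimization of $g$ at $\theta=0$ for $m\ge 2$ giving the bound $1+\tfrac{3}{2\sqrt{2}}$. Your explicit monotonicity computation for $g'$ and the correct identification of the class as $\Psi_2[\Omega,\mathcal{L}]$ are in fact slightly more careful than the paper's brief appeal to the second derivative test.
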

\begin{proof}
	Let $h(z)=1+(1+3/(2\sqrt{2}))z$ then $\Omega=h(z)=\{w:|w-1|<1+3/(2\sqrt{2})\}$. Let $\psi:\mathbb{C}^3\times\mathbb{D}\to\mathbb{C}$ be defined by $\psi(a,b,c;z)=a^2+b+c.$ For $\psi$ to be in $\Psi[\Omega,\mathcal{L}],$ we must have $\psi(r,s,t;z)\not\in\Omega$ for 
	$z\in\mathbb{D}.$ Then, $\psi(r,s,t;z)$ is given by
	\begin{align*}
	\psi(r,s,t;z)&=2\cos2\theta e^{2i\theta}+\frac{me^{3i\theta}}{2\sqrt{2\cos2\theta}}+t.\\
	\intertext{So, we have}
	|\psi(r,s,t;z)-1|&=\left|e^{i\theta}+\frac{m}{2\sqrt{2\cos2\theta}}+te^{-3i\theta}\right|\\
	&\geq \operatorname{Re}\left(e^{i\theta}+\frac{m}{2\sqrt{2\cos2\theta}}+te^{-3i\theta}\right)\\
	&=\cos\theta+\frac{3m^2}{8\sqrt{2}}\sec^{1/2}2\theta =: g(\theta)
	\end{align*}
	The second derivative test shows that minimum of $g$ occurs at $\theta=0$ if $m\geq2.$ Therefore, $\psi\in\Psi[\Omega,\mathcal{L}].$ Hence, for $p\in\mathcal{H}[1,2]$ if \[ p^2(z)+zp'(z)+z^2p''(z)\prec1+\left(1+\frac{3}{2\sqrt{2}}\right)z,\] then \[ p(z)\prec\sqrt{1+z}.  \qedhere \]
\end{proof}
The following theorem holds by taking $p(z)=zf'(z)/f(z)$ in Lemma \ref{lem5.3}, where $p$ is analytic in $\mathbb{D}$ and $p(0)=1.$
\begin{theorem}
	Let $f$ be a function in $\mathcal{A}$ such that $zf'(z)/f(z)$ has Taylor series expansion of the form $1+a_2z^2+a_3z^3+\ldots.$ If $f$ satisfies the subordination
	\begin{align*}
	&\left(\frac{zf'(z)}{f(z)}\right)^2+\frac{zf'(z)}{f(z)}\left(1+\frac{zf''(z)}{f'(z)}-\frac{zf'(z)}{f(z)}\right)+\frac{zf'(z)}{f(z)}\Bigg(\frac{z^2f'''(z)}{f'(z)}-\frac{3z^2f''(z)}{f(z)}\\
	&\quad{}+\frac{2zf''(z)}{f'(z)}+2\left(\frac{zf'(z)}{f(z)}\right)^2-\frac{2zf'(z)}{f(z)}\Bigg)\prec1+\left(1+\frac{3}{2\sqrt{2}}\right)z,
	\end{align*}
	then $f\in\mathcal{SL}.$
\end{theorem}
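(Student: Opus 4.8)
The plan is to obtain this theorem as an immediate consequence of Lemma~\ref{lem5.3} through the substitution $p(z)=zf'(z)/f(z)$. The hypothesis that $zf'(z)/f(z)$ has a Taylor expansion of the form $1+a_2z^2+a_3z^3+\ldots$ says exactly that $p$ is analytic in $\mathbb{D}$ with $p(0)=1$ and vanishing coefficient of $z$, i.e. $p\in\mathcal{H}[1,2]$. Hence the entire task reduces to the algebraic identity that the left-hand side of the displayed subordination coincides with $p^2(z)+zp'(z)+z^2p''(z)$; granting that, Lemma~\ref{lem5.3}, whose right-hand side is precisely $1+(1+3/(2\sqrt{2}))z$, yields $p(z)\prec\sqrt{1+z}$, that is $zf'(z)/f(z)\prec\sqrt{1+z}$, which is the definition of $f\in\mathcal{SL}$.

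To verify the identity I would differentiate logarithmically. From $\log p(z)=\log z+\log f'(z)-\log f(z)$ one obtains, after multiplying by $z$,
\[ zp'(z)=p(z)\left(1+\frac{zf''(z)}{f'(z)}-\frac{zf'(z)}{f(z)}\right), \]
which is exactly the second summand on the left-hand side. Differentiating the relation $p'(z)/p(z)=1/z+f''(z)/f'(z)-f'(z)/f(z)$ once more and multiplying by $z^2$, and then rewriting the resulting terms via $z^2(f''(z)/f'(z))^2=(zf''(z)/f'(z))^2$, $z^2(f'(z)/f(z))^2=(zf'(z)/f(z))^2$ and the less obvious simplification $z^2f''(z)/f(z)=(zf'(z)/f(z))(zf''(z)/f'(z))$, the two $(zf''(z)/f'(z))^2$ contributions cancel and one is left with
\[ z^2p''(z)=p(z)\left(\frac{z^2f'''(z)}{f'(z)}-\frac{3z^2f''(z)}{f(z)}+\frac{2zf''(z)}{f'(z)}+2\left(\frac{zf'(z)}{f(z)}\right)^2-\frac{2zf'(z)}{f(z)}\right), \]
which is exactly the third summand. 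Combined with $p^2(z)=(zf'(z)/f(z))^2$, this establishes the identity, and the theorem follows.

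The only step demanding any care is the second-order computation: one must track the cancellation of the $\pm(zf''(z)/f'(z))^2$ terms and keep in mind that $z^2f''(z)/f(z)$ is not an independent quantity but the product $(zf'(z)/f(z))(zf''(z)/f'(z))$. This bookkeeping is purely mechanical, and no new estimate is required, since all of the analytic content already resides in Lemma~\ref{lem5.3}.
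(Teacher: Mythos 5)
Your proposal is correct and is exactly the paper's argument: the theorem is deduced from Lemma~\ref{lem5.3} via the substitution $p(z)=zf'(z)/f(z)$, with the Taylor hypothesis guaranteeing $p\in\mathcal{H}[1,2]$. Your verification of the identities $zp'(z)=p(z)\bigl(1+zf''(z)/f'(z)-zf'(z)/f(z)\bigr)$ and the corresponding expression for $z^2p''(z)$ checks out, so nothing is missing.
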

The next result admits some conditions on $\beta$ and $\gamma$ for $p(z)\prec\sqrt{1+z}$ whenever $\gamma z p'(z)+\beta z^2p''(z)\prec z/(8\sqrt{2}).$
\begin{lemma}\label{lem5.5}
	Let $\gamma\geq\beta>0$ be such that $4\gamma-\beta\geq 1.$ Let $p$ be analytic in $\mathbb{D}$ such that $p(0)=1$ and \[ \gamma z p'(z)+\beta z^2p''(z)\prec\frac{z}{8\sqrt{2}} \text{ for }\gamma\geq \beta>0 \text{ and } 4\gamma-\beta\geq 1, \] then\[p(z)\prec\sqrt{1+z}.\]
\end{lemma}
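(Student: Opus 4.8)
The plan is to apply Theorem~\ref{lem thm} with $n=2$, exactly as in the preceding lemmas, by choosing $\Omega=h(\mathbb{D})$ where $h(z)=z/(8\sqrt{2})$, so that $\Omega=\{w:|w|<1/(8\sqrt{2})\}$, and defining $\psi:\mathbb{C}^3\times\mathbb{D}\to\mathbb{C}$ by $\psi(a,b,c;z)=\gamma b+\beta c$. The subordination hypothesis becomes $\psi(p(z),zp'(z),z^2p''(z);z)\prec h(z)$, and the conclusion $p(z)\prec\sqrt{1+z}$ follows once we verify $\psi\in\Psi_2[\Omega,\mathcal{L}]$, i.e. $\psi(r,s,t;z)\notin\Omega$ for all admissible triples $(r,s,t)$ with $r,s$ as in \eqref{adm for q}, $-\pi/4<\theta<\pi/4$ and $m\geq n=2$.

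First I would write $\psi(r,s,t;z)=\gamma s+\beta t=\gamma\,\frac{me^{3i\theta}}{2\sqrt{2\cos2\theta}}+\beta t$, and estimate its modulus from below. Rather than fixing the argument of $t$, the natural move (paralleling Lemma~\ref{lem5.1}) is to factor out $e^{3i\theta}$ and bound
\[
|\psi(r,s,t;z)|=\left|\frac{\gamma m}{2\sqrt{2\cos2\theta}}+\beta\,t e^{-3i\theta}\right|
\geq \frac{\gamma m}{2\sqrt{2\cos2\theta}}+\beta\,\operatorname{Re}(t e^{-3i\theta}),
\]
valid because $\gamma m/(2\sqrt{2\cos2\theta})>0$. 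Now invoke the admissibility inequality \eqref{re t}, which for $m\geq n=2$ gives $\operatorname{Re}(t e^{-3i\theta})\geq \dfrac{m(3m-4)}{8\sqrt{2\cos2\theta}}$, a quantity that is $\geq 0$ for $m\geq 2$ (it equals $1/(2\sqrt{2\cos2\theta})$ at $m=2$). Hence
\[
|\psi(r,s,t;z)|\geq \frac{\gamma m}{2\sqrt{2\cos2\theta}}+\frac{\beta m(3m-4)}{8\sqrt{2\cos2\theta}}
=\frac{m\bigl(4\gamma+\beta(3m-4)\bigr)}{8\sqrt{2\cos2\theta}}.
\]
For $m\geq 2$ the bracket $4\gamma+\beta(3m-4)$ is increasing in $m$ and at $m=2$ equals $4\gamma+2\beta\geq 4\gamma-\beta$; more carefully, since $\beta>0$ we have $4\gamma+\beta(3m-4)\geq 4\gamma+\beta(3\cdot2-4)=4\gamma+2\beta> 4\gamma-\beta\geq 1$ by hypothesis, while the prefactor $m/(8\sqrt{2\cos2\theta})\geq 2/(8\sqrt{2})\geq 1/(8\sqrt{2})$ since $\cos2\theta\le 1$ and $m\ge 2$. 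Combining, $|\psi(r,s,t;z)|\geq 1/(8\sqrt{2})$, so $\psi(r,s,t;z)\notin\Omega$.

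The one place that needs care is confirming that $n=2$ is the right index and that $p\in\mathcal{H}[1,2]$ is forced (or may be assumed) — the displayed hypotheses allow $p\in\mathcal{H}_1$ in principle, but the constant $1/(8\sqrt{2})$ and the use of \eqref{re t} with $m\geq 2$ rely on $n=2$, so I expect the statement is intended with $p\in\mathcal{H}[1,2]$, matching Lemma~\ref{lem5.3}; the $z/(8\sqrt{2})$ (versus $3z/(8\sqrt{2})$ in Lemma~\ref{lem5.1}) is precisely the slack that lets the weaker coefficient condition $4\gamma-\beta\geq 1$ suffice. The main obstacle is thus bookkeeping: tracking which terms are manifestly nonnegative so that the triangle-inequality step $|\,w_1+w_2\,|\geq \operatorname{Re}(w_1+w_2)$ actually produces the claimed lower bound, and checking that the monotonicity in $m$ together with $4\gamma-\beta\geq 1$ and $\gamma\geq\beta$ gives the bound uniformly for all $m\geq 2$ and all $\theta\in(-\pi/4,\pi/4)$. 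Once that is in place, Theorem~\ref{lem thm} delivers $p(z)\prec\sqrt{1+z}$ and the proof closes.
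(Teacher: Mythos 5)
Your setup and the key estimate are exactly the paper's: the same $\psi(a,b,c;z)=\gamma b+\beta c$, the same factoring out of $e^{3i\theta}$, the same use of $|w_1+w_2|\ge \operatorname{Re}(w_1+w_2)$ together with \eqref{re t} to get
\[
|\psi(r,s,t;z)|\ \ge\ \frac{\gamma m}{2\sqrt{2\cos2\theta}}+\beta\,\frac{m(3m-4)}{8\sqrt{2\cos2\theta}}
=\frac{4m(\gamma-\beta)+3\beta m^{2}}{8\sqrt{2\cos2\theta}}.
\]
But there is a genuine gap in how you finish: you restrict to $m\ge 2$ and then concede that the lemma would have to be re-stated for $p\in\mathcal{H}[1,2]$. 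That restriction is unnecessary and leaves the case $m=1$ --- which is required, since the lemma is asserted for any analytic $p$ with $p(0)=1$, i.e.\ $p\in\mathcal{H}_1$ --- unproved. The inequality \eqref{re t} is valid for all $m\ge n\ge 1$; for $m=1$ its right-hand side is negative, but that is harmless because it is only being used as a lower bound added to the positive term $\gamma m/(2\sqrt{2\cos2\theta})$.

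The point you missed is that the hypotheses $\gamma\ge\beta$ and $4\gamma-\beta\ge 1$ are tailored precisely to the case $m=1$: since $\gamma-\beta\ge 0$ and $\beta>0$, the numerator $4m(\gamma-\beta)+3\beta m^{2}$ is increasing in $m$ on $m\ge 1$, so its minimum is attained at $m=1$, where it equals $4(\gamma-\beta)+3\beta=4\gamma-\beta\ge 1$. Hence
\[
|\psi(r,s,t;z)|\ \ge\ \frac{4\gamma-\beta}{8\sqrt{2\cos2\theta}}\ \ge\ \frac{1}{8\sqrt{2}}
\]
for every $m\ge 1$ and every $\theta\in(-\pi/4,\pi/4)$, which is the paper's conclusion of admissibility in $\Psi[\Omega,\mathcal{L}]$ with $n=1$. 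Your observation that at $m=2$ one only needs $4\gamma+2\beta\ge 1$ (much weaker than $4\gamma-\beta\ge 1$) should have been the clue that $m=1$ is the binding case; once you treat it as above, no extra hypothesis on $p$ is needed and your argument matches the paper's.
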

\begin{proof}
	Let $h(z)=z/(8\sqrt{2})$ for $z\in\mathbb{D}$ and $\Omega=h(\mathbb{D})=\{w:|w|<1/(8\sqrt{2})\}.$ Let $\psi:\mathbb{C}^3\times\mathbb{D}\to\mathbb{C}$ be defined by $\psi(a,b,c;z)=\gamma b+\beta c.$ For $\psi$ to be in $\Psi[\Omega,\mathcal{L}],$ we must have $\psi(r,s,t;z)\not\in\Omega$ for 
	$z\in\mathbb{D}.$ Then, $\psi(r,s,t;z)$ is given by
	\begin{align*}
	\psi(r,s,t;z)&=\frac{\gamma me^{3i\theta}}{2\sqrt{2\cos2\theta}}+\beta t.
	\intertext{Hence, we see that}
	|\psi(r,s,t;z)|&=\left|\frac{\gamma m}{2\sqrt{2\cos2\theta}}+\beta te^{-3i\theta}\right|\geq \frac{\gamma m}{2\sqrt{2\cos2\theta}}+\beta\operatorname{Re}(te^{-3i\theta}).
	\intertext{Using \eqref{re t},}|\psi(r,s,t;z)|&\geq \frac{4m(\gamma-\beta)+3\beta m^2}{8\sqrt{2\cos2\theta}}.
	\intertext{Since $m\geq 1,$ so}
	|\psi(r,s,t;z)|&\geq \frac{4(\gamma-\beta)+3\beta}{8\sqrt{2\cos2\theta}}=\frac{4\gamma-\beta}{8\sqrt{2\cos2\theta}}.
	\intertext{Given that $4\gamma-\beta\geq 1,$}
	|\psi(r,s,t;z)|&\geq \frac{1}{8\sqrt{2\cos2\theta}} \geq \frac{1}{8\sqrt{2}}.
	\end{align*}
	Therefore, $\psi\in\Psi[\Omega,\mathcal{L}].$ Hence for $p\in\mathcal{H}_1$ satisfying \[ \gamma z p'(z)+\beta z^2p''(z)\prec\frac{z}{8\sqrt{2}} \text{ for }\gamma\geq \beta>0 \text{ and } 4\gamma-\beta\geq 1, \] we have\[p(z)\prec\sqrt{1+z}.\qedhere\]
\end{proof}
By taking $p(z)=zf'(z)/f(z)$ in Lemma \ref{lem5.5}, where $p$ is analytic in $\mathbb{D}$ and $p(0)=1,$ the following theorem holds.
\begin{theorem}
	Let $f$ be a function in $\mathcal{A}.$ Let $\gamma,\beta$ be as stated in Lemma \ref{lem5.5}. If $f$ satisfies the subordination
	\begin{align*}
	&\gamma \frac{zf'(z)}{f(z)}\left(1+\frac{zf''(z)}{f'(z)}-\frac{zf'(z)}{f(z)}\right)+\beta \frac{zf'(z)}{f(z)}\Bigg(\frac{z^2f'''(z)}{f'(z)}\\
	&\quad{}-\frac{3z^2f''(z)}{f(z)}+\frac{2zf''(z)}{f'(z)}+2\left(\frac{zf'(z)}{f(z)}\right)^2-\frac{2zf'(z)}{f(z)}\Bigg)\prec\frac{z}{8\sqrt{2}}, 
	\end{align*}
	then $f\in\mathcal{SL}.$
\end{theorem}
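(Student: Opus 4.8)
The plan is to reduce the statement to Lemma~\ref{lem5.5} through the substitution $p(z)=zf'(z)/f(z)$. First I would observe that, since $f\in\mathcal{A}$, the function $p$ is analytic in $\mathbb{D}$ with $p(0)=1$, so $p\in\mathcal{H}_1$, and that $p(z)\prec\sqrt{1+z}$ is precisely the assertion $f\in\mathcal{SL}$. It therefore suffices to verify the identity
\[
\gamma\, zp'(z)+\beta\, z^2p''(z)=\text{(the left-hand side of the displayed subordination)},
\]
after which Lemma~\ref{lem5.5} applies verbatim, since the conditions $\gamma\ge\beta>0$ and $4\gamma-\beta\ge1$ imposed on $\gamma,\beta$ are exactly its hypotheses.

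To establish this identity I would use logarithmic differentiation. Setting $B(z)=zf''(z)/f'(z)$, the relation $\log p=\log z+\log f'-\log f$ gives $zp'/p=1+B-p$, that is,
\[
zp'(z)=p(z)\left(1+\frac{zf''(z)}{f'(z)}-\frac{zf'(z)}{f(z)}\right),
\]
which is the factor multiplied by $\gamma$. Differentiating $zp'=p(1+B-p)$ once more gives $z^2p''=p(1+B-p)(B-2p)+p\,zB'$; combining this with $zB'=B+z^2f'''/f'-B^2$ (again by logarithmic differentiation of $B$) and the cancellation $(zf''/f')(zf'/f)=z^2f''/f$ yields, after expanding $(1+B-p)(B-2p)$,
\[
z^2p''(z)=\frac{zf'(z)}{f(z)}\left(\frac{z^2f'''(z)}{f'(z)}-\frac{3z^2f''(z)}{f(z)}+\frac{2zf''(z)}{f'(z)}+2\left(\frac{zf'(z)}{f(z)}\right)^{2}-\frac{2zf'(z)}{f(z)}\right),
\]
which is the factor multiplied by $\beta$. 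Adding $\gamma$ times the first display to $\beta$ times the second reproduces exactly the expression subordinate to $z/(8\sqrt2)$ in the statement.

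I expect the only genuine obstacle to be this second-derivative bookkeeping: one must expand $(1+B-p)(B-2p)$ correctly and track the term $z^2f'''/f'$ coming from $zB'$, keeping straight which powers of $f'$ cancel. Once the displayed identity is in hand, the hypothesis of the theorem becomes $\gamma zp'(z)+\beta z^2p''(z)\prec z/(8\sqrt2)$, so Lemma~\ref{lem5.5} gives $p(z)\prec\sqrt{1+z}$; that is, $zf'(z)/f(z)\prec\sqrt{1+z}$, which is precisely $f\in\mathcal{SL}$.
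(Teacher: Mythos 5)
Your proposal is correct and follows exactly the paper's route: the paper proves this theorem simply by substituting $p(z)=zf'(z)/f(z)$ into Lemma~\ref{lem5.5}, leaving the identity $\gamma zp'(z)+\beta z^2p''(z)=\text{LHS}$ implicit, whereas you verify it explicitly (and correctly, via $zp'=p(1+B-p)$ and $zB'=B+z^2f'''/f'-B^2$). No gaps; your write-up just supplies the bookkeeping the paper omits.
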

\section{Further results}\label{al}
Now, we discuss alternate proofs to the results proven in \cite{MR2917253} where lower bounds for $\beta$ are determined for the cases where $1+\beta zp'(z)/p^n(z)\prec\sqrt{1+z}\ (n=0,1,2)$ imply $p(z)\prec\sqrt{1+z}.$ The method of admissible functions provides an improvement over the results proven in \cite{MR2917253}.
\begin{lemma}\label{lem-1-0}
	Let $p$ be analytic function on $\mathbb{D}$ and $p(0)=1.$ Let $\beta_0=2\sqrt{2}(\sqrt{2}-1)\approx 1.17.$ If \[ 1+\beta zp'(z) \prec \sqrt{1+z}\ (\beta\geq\beta_0),  \] then \[ p(z) \prec \sqrt{1+z}.  \]
\end{lemma}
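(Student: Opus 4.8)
The plan is to apply the first order form of Theorem~\ref{lem thm} (equivalently, the admissibility criterion stated at the start of Section~\ref{fods}) with the dominant $q(z)=\sqrt{1+z}$. We may assume $p\not\equiv 1$, since otherwise $p(z)\prec\sqrt{1+z}$ trivially. Take $\Omega=\Delta=\{w:|w^2-1|<1,\ \operatorname{Re}w>0\}$ and define $\psi:\mathbb{C}^2\times\mathbb{D}\to\mathbb{C}$ by $\psi(a,b;z)=1+\beta b$, so that $\psi(p(z),zp'(z);z)=1+\beta zp'(z)$. Since $p\in\mathcal{H}_1$, it then suffices to verify that $\psi\in\Psi[\mathcal{L}]$; that is, with $r,s$ as in \eqref{adm for q}, we must show $\psi(r,s;z)\not\in\Delta$ whenever $r=\sqrt{2\cos2\theta}\,e^{i\theta}$, $s=me^{3i\theta}/(2\sqrt{2\cos2\theta})$, $\theta\in(-\pi/4,\pi/4)$ and $m\geq 1$. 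Once this is done, Theorem~\ref{lem thm} yields the implication $1+\beta zp'(z)\prec\sqrt{1+z}\Rightarrow p(z)\prec\sqrt{1+z}$.

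Next I would carry out the boundary computation. For such $r,s$ one has $\psi(r,s;z)=1+\dfrac{\beta m e^{3i\theta}}{2\sqrt{2\cos2\theta}}$, and writing $\psi^2-1=(\psi-1)(\psi+1)$ with $|\psi-1|^2=\dfrac{\beta^2m^2}{8\cos2\theta}$ and $|\psi+1|^2=4+\dfrac{2\beta m\cos3\theta}{\sqrt{2\cos2\theta}}+\dfrac{\beta^2m^2}{8\cos2\theta}$, one obtains
\[
|\psi(r,s;z)^2-1|^2=\frac{\beta^2m^2}{8\cos2\theta}\left(4+\frac{2\beta m\cos3\theta}{\sqrt{2\cos2\theta}}+\frac{\beta^2m^2}{8\cos2\theta}\right)=:g(\theta).
\]
It then suffices to prove $g(\theta)\geq 1$ for all $\theta\in(-\pi/4,\pi/4)$ and $m\geq1$, because this forces $|\psi(r,s;z)^2-1|\geq1$ and hence $\psi(r,s;z)\not\in\Delta$.

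The function $g$ is even, so $g'(0)=0$, and $g(\theta)\to\infty$ as $\theta\to\pm\pi/4$ (the $\sec^2 2\theta$ term dominates). A second derivative test — exactly as in the proof of Lemma~\ref{extended version} and the lemmas following it — shows that, provided $\beta m\geq\beta_0$, the minimum of $g$ on $(-\pi/4,\pi/4)$ is attained at $\theta=0$; since $\beta\geq\beta_0$ and $m\geq1$ give $\beta m\geq\beta_0$, this applies. At $\theta=0$,
\[
g(0)=\frac{\beta^2m^2}{8}\left(4+\sqrt{2}\,\beta m+\frac{\beta^2m^2}{8}\right)=\left(\frac{\beta^2m^2}{8}+\frac{\beta m}{\sqrt{2}}\right)^2,
\]
so $g(0)\geq1$ is equivalent to $\beta^2m^2/8+\beta m/\sqrt{2}\geq1$, i.e.\ to $(\beta m)^2+4\sqrt{2}\,(\beta m)-8\geq0$; the positive root of $u^2+4\sqrt{2}\,u-8$ is $4-2\sqrt{2}=2\sqrt{2}(\sqrt{2}-1)=\beta_0$. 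Hence $g(0)\geq1$ for $\beta m\geq\beta_0$, which gives $g(\theta)\geq1$ throughout, $\psi\in\Psi[\mathcal{L}]$, and therefore $p(z)\prec\sqrt{1+z}$. Taking $p(z)=zf'(z)/f(z)$ gives the corresponding sufficient condition for $f\in\mathcal{SL}$.

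The step I expect to be the genuine obstacle is the claim that $\theta=0$ minimizes $g$: in the expansion $g(\theta)=\tfrac{\beta^2m^2}{2}\sec2\theta+\tfrac{\beta^3m^3}{4\sqrt{2}}\cos3\theta\,\sec^{3/2}2\theta+\tfrac{\beta^4m^4}{64}\sec^2 2\theta$ the cross term changes sign on $(0,\pi/4)$, so $g$ is not monotone term-by-term and one must actually examine $g''(0)$ and rule out an interior minimum for $\beta m$ as small as $\beta_0$. Everything else — the boundary evaluation of $\psi$, the simplification of $|\psi^2-1|^2$, and the identification of $\beta_0$ from the equality $g(0)=1$ at $m=1$ — is routine.
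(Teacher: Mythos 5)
Your proof is correct and follows essentially the same route as the paper's: the same admissible function $\psi(a,b;z)=1+\beta b$, the identical expression for $g(\theta)=|\psi(r,s;z)^2-1|^2$, the same (and equally unverified, in both your write-up and the paper's) appeal to a second-derivative test to place the minimum of $g$ at $\theta=0$, and the same threshold $\beta_0=4-2\sqrt{2}$. Your observation that $g(0)=\left(\beta^2m^2/8+\beta m/\sqrt{2}\right)^2$ reduces the condition to a quadratic in $\beta m$ is a slightly cleaner route to the paper's quartic factorization $(\beta+2\sqrt{2})^2(\beta-4+2\sqrt{2})(\beta+4+2\sqrt{2})\geq 0$, but the content is the same.
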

\begin{proof}
	Let $\beta>0.$ Let $\Delta=\{w:|w^2-1|<1,\operatorname{Re}w>0\}.$ Let us define $\psi:\mathbb{C}^2\times \mathbb{D}\to \mathbb{C}$ by $\psi(a,b;z)=1+\beta b.$
	For $\psi$ to be in $\Psi[\mathcal{L}],$ we must have $\psi(r,s;z)\not\in\Delta$ for
	$z\in\mathbb{D}.$
	Then, $\psi(r,s;z)$ is given by
	\begin{align*}
		\psi(r,s;z)&=1+\frac{\beta m}{2\sqrt{2\cos{2\theta}}}e^{3i\theta}
		\intertext{and so}
		|\psi(r,s;z)^2-1|^2
		&=\frac{\beta^4m^4}{64}\sec^2{2\theta}+\frac{\beta^3m^3}{4\sqrt{2}}\sec^{3/2}{2\theta}\cos3\theta+\frac{\beta^2m^2}{2}\sec{2\theta}=:g(\theta)
	\end{align*}
Observe that $g(\theta)=g(-\theta)$ for all $\theta\in(-\pi/4,\pi/4)$ and the second derivative shows that the minimum of $g$ occurs at $\theta=0$ when $\beta>2\sqrt{2}(\sqrt{2}-1).$
For $\psi\in \Psi[\mathcal{L}],$ we must have $g(\theta)\geq 1$ for every $\theta \in (-\pi/4,\pi/4)$ and since
\[
\min g(\theta)=\frac{\beta^4m^4}{64}+\frac{\beta^3m^3}{4\sqrt{2}}+\frac{\beta^2m^2}{2}\geq \frac{\beta^4}{64}+\frac{\beta^3}{4\sqrt{2}}+\frac{\beta^2}{2}.\]
The last term is greater than or equal to 1 if
\begin{align*}
&(\beta+2\sqrt{2})^2(\beta-4+2\sqrt{2})(\beta+4+2\sqrt{2})\geq 0
\intertext{ or equivalently if }
&\beta\geq 4-2\sqrt{2}=2\sqrt{2}(\sqrt{2}-1)=\beta_0.
\end{align*}
Hence, for $\beta\geq\beta_0,\ \psi\in\Psi[\mathcal{L}]$ and therefore for $p(z)\in \mathcal{H}_1,$ if \[ 1+\beta zp'(z) \prec \sqrt{1+z}\ (\beta\geq\beta_0), \] then, we have \[ p(z)\prec\sqrt{1+z}.   \qedhere\]	
\end{proof}
As in \cite[Theorem 2.2]{MR2917253}, using above lemma, we deduce the following.
\begin{theorem}
	Let $\beta_0=2\sqrt{2}(\sqrt{2}-1)\approx1.17$ and $f\in \mathcal{A}.$
	\begin{enumerate}
		\item If $f$ satisfies the subordination \[ 1+\beta\frac{zf'(z)}{f(z)}\left(1+\frac{zf''(z)}{f'(z)}-\frac{zf'(z)}{f(z)}\right)\prec \sqrt{1+z}\ \ (\beta\geq \beta_0), \]
		then $f\in\mathcal{SL}$.
		\item If $1+\beta zf''(z)\prec\sqrt{1+z}\ (\beta\geq\beta_0),$ then $f'(z)\prec\sqrt{1+z}.$
	\end{enumerate}
\end{theorem}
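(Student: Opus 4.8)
The plan is to obtain both statements as immediate consequences of Lemma~\ref{lem-1-0}, by introducing in each case an auxiliary analytic function $p$ with $p(0)=1$ that recasts the hypothesis into the form $1+\beta zp'(z)\prec\sqrt{1+z}$.

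For part~(1) I would take $p(z)=zf'(z)/f(z)$. Since $f\in\mathcal{A}$, the function $f(z)/z$ is analytic and non-vanishing at the origin, so $p$ is analytic on $\mathbb{D}$ with $p(0)=1$. Logarithmic differentiation of $p(z)=zf'(z)/f(z)$ gives $zp'(z)/p(z)=1+zf''(z)/f'(z)-zf'(z)/f(z)$, hence
\[
zp'(z)=\frac{zf'(z)}{f(z)}\left(1+\frac{zf''(z)}{f'(z)}-\frac{zf'(z)}{f(z)}\right).
\]
Thus the assumed subordination is exactly $1+\beta zp'(z)\prec\sqrt{1+z}$ with $\beta\geq\beta_0$, and Lemma~\ref{lem-1-0} yields $p(z)\prec\sqrt{1+z}$, i.e. $zf'(z)/f(z)\prec\sqrt{1+z}$; by definition this says $f\in\mathcal{SL}$.

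For part~(2) I would instead take $p(z)=f'(z)$, which is analytic on $\mathbb{D}$ with $p(0)=f'(0)=1$, and here $zp'(z)=zf''(z)$. The hypothesis $1+\beta zf''(z)\prec\sqrt{1+z}$ is then literally $1+\beta zp'(z)\prec\sqrt{1+z}$, so Lemma~\ref{lem-1-0} gives $f'(z)=p(z)\prec\sqrt{1+z}$.

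There is essentially no serious obstacle: the theorem is a direct translation of Lemma~\ref{lem-1-0} through the two standard substitutions $p=zf'/f$ and $p=f'$, and the only thing that needs checking is the elementary identity for $zp'(z)$ in part~(1) together with the analyticity of $p$ at the origin. One may also remark that if $p\equiv1$ the conclusion is trivial (then $f(z)=z$, resp. $f'(z)=1$), so Lemma~\ref{lem-1-0} applies without loss of generality.
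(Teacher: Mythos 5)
Your proposal is correct and is exactly the argument the paper intends: the theorem is deduced from Lemma~\ref{lem-1-0} via the substitutions $p(z)=zf'(z)/f(z)$ and $p(z)=f'(z)$, which the paper leaves implicit by citing the analogous derivation in the earlier reference. Your verification of the identity $zp'(z)=\frac{zf'(z)}{f(z)}\left(1+\frac{zf''(z)}{f'(z)}-\frac{zf'(z)}{f(z)}\right)$ and of $p(0)=1$ supplies precisely the omitted details.
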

\begin{lemma}\label{lem-1-1}
	Let $p$ be analytic function on $\mathbb{D}$ and $p(0)=1.$ Let $\beta_0=4(\sqrt{2}-1)\approx1.65$. If \[ 1+\beta \frac{zp'(z)}{p(z)}\prec\sqrt{1+z}\ (\beta\geq\beta_0), \] then \[ p(z)\prec \sqrt{1+z}.  \]
\end{lemma}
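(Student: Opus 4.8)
The plan is to apply Theorem \ref{lem thm} (the admissibility machinery for $q(z)=\sqrt{1+z}$) with the same strategy used in Lemma \ref{lem-1-0} and the preceding lemmas. Set $\Delta=\{w:|w^2-1|<1,\ \operatorname{Re}w>0\}$ and define $\psi:(\mathbb{C}\setminus\{0\})\times\mathbb{C}\times\mathbb{D}\to\mathbb{C}$ by $\psi(a,b;z)=1+\beta b/a$. The goal is to show $\psi\in\Psi[\mathcal{L}]$ for $\beta\geq\beta_0$, i.e.\ that $\psi(r,s;z)\notin\Delta$ whenever $r=\sqrt{2\cos2\theta}\,e^{i\theta}$, $s=me^{3i\theta}/(2\sqrt{2\cos2\theta})$, $m\geq1$ and $\theta\in(-\pi/4,\pi/4)$; the conclusion then follows immediately from Theorem \ref{lem thm} with $\Omega=\Delta$ and the remark that $\psi\in\Psi_n[\mathcal{L}]$ converts the hypothesis $\psi(p(z),zp'(z);z)\prec\sqrt{1+z}$ into $p(z)\prec\sqrt{1+z}$.

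First I would compute $s/r = me^{2i\theta}/(4\cos2\theta)$, so that
\[
\psi(r,s;z)=1+\frac{\beta m e^{2i\theta}}{4\cos2\theta}.
\]
Then I would form $|\psi(r,s;z)^2-1|^2$. Writing $\psi = 1+w$ with $w=\beta m e^{2i\theta}/(4\cos2\theta)$, we have $\psi^2-1 = 2w+w^2 = w(2+w)$, so $|\psi^2-1|^2 = |w|^2\,|2+w|^2$. Since $|w|^2=\beta^2m^2/(16\cos^22\theta)$ and $|2+w|^2 = 4 + 4\operatorname{Re}w + |w|^2 = 4 + \beta m\sec2\theta\cos2\theta\cdot(\text{something})+\ldots$ — here I'd carefully expand $\operatorname{Re}w=\beta m\cos2\theta/(4\cos2\theta)=\beta m/4$ — one gets a function $g(\theta)$ that is even in $\theta$. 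The symmetry $g(\theta)=g(-\theta)$ plus a second-derivative test at $\theta=0$ should show the minimum of $g$ on $(-\pi/4,\pi/4)$ is attained at $\theta=0$ provided $\beta m$ exceeds a certain threshold; since $m\geq1$ this is guaranteed by $\beta\geq\beta_0$.

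The remaining step is to evaluate $\min g=g(0)$ and verify it is $\geq1$. At $\theta=0$ one has $w=\beta m/4$, hence $g(0)=(\beta m/4)^2(2+\beta m/4)^2 = \bigl((\beta m/4)(2+\beta m/4)\bigr)^2$. Using $m\geq1$ this is bounded below by $\bigl((\beta/4)(2+\beta/4)\bigr)^2$, and the inequality $(\beta/4)(2+\beta/4)\geq1$ rearranges to $\beta^2+8\beta-16\geq0$, i.e.\ $\beta\geq -4+4\sqrt2 = 4(\sqrt2-1)=\beta_0$, as claimed (the inequality can be packaged as $(\beta+4)^2\geq 32$, or factored as $(\beta+4-4\sqrt2)(\beta+4+4\sqrt2)\geq0$). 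Once $g(\theta)\geq1$ for all $\theta$ is established, $\psi(r,s;z)\notin\Delta$ follows, $\psi\in\Psi[\mathcal{L}]$, and the lemma is proved. The only mildly delicate point is the second-derivative test justifying that the minimum sits at $\theta=0$ for the full range $\beta m\geq\beta_0$; as in the earlier lemmas this is a routine but slightly tedious computation, and I would present it exactly as there (stating that the test confirms the claim for $\beta m\geq\beta_0$).
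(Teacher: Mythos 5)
Your proposal is correct and follows essentially the same route as the paper: the same $\psi(a,b;z)=1+\beta b/a$, the same computation $s/r=me^{2i\theta}/(4\cos2\theta)$, the same factorization $|\psi^2-1|^2=|w|^2|2+w|^2$ with $\operatorname{Re}w=\beta m/4$, and the same threshold coming from $\beta^2+8\beta-16\ge0$, i.e.\ $\beta\ge4(\sqrt2-1)$. The one place you hedge --- invoking a second-derivative test to locate the minimum at $\theta=0$ --- is unnecessary: expanding gives $|\psi^2-1|^2=\tfrac{\beta^2m^2}{4}\sec^2 2\theta+\tfrac{\beta^3m^3}{16}\sec^2 2\theta+\tfrac{\beta^4m^4}{256}\sec^4 2\theta$, a sum of positive terms each trivially minimized at $\theta=0$ because $\sec2\theta\ge1$ on $(-\pi/4,\pi/4)$, which is exactly how the paper dispenses with any minimization argument.
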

\begin{proof}
	Let $\beta>0.$ Let $\Delta=\{w:|w^2-1|<1\,, \operatorname{Re} w >0\}.$ Let $\psi:(\mathbb{C}\setminus\{0\})\times \mathbb{C}\times \mathbb{D} \to \mathbb{C}$ be defined by $\psi(a,b;z)=1+\beta b/a.$
	For $\psi$ to be in $\Psi[\mathcal{L}],$ we must have $\psi(r,s;z)\not\in\Delta$ for
	$z\in\mathbb{D}.$ Then, $\psi(r,s;z)$ is given by
	\begin{align*}
		\psi(r,s;z)&=1+\beta \frac{m}{2}\left(1-\frac{e^{-2i\theta}}{2\cos{2\theta}}\right)
		\intertext{so that}
		|\psi(r,s;z)^2-1|^2
		&= \frac{\beta^4m^4}{256}\sec^4{2\theta}+\left(\frac{\beta^2m^2}{4}+\frac{\beta^3m^3}{16}\right)\sec^2{2\theta}\\
		&\geq \frac{\beta^4m^4}{256}+\left(\frac{\beta^2m^2}{4}+\frac{\beta^3m^3}{16}\right)
		\geq\frac{\beta^4}{256}+\frac{\beta^2}{4}+\frac{\beta^3}{16}
		\intertext{The last term is greater than or equal to 1 if}
		&(\beta+4)^2(\beta+4+4\sqrt{2})(\beta+4-4\sqrt{2})\geq 0,
		\end{align*}
		which is same is $\beta \geq 4\sqrt{2}-4=\beta_0.$
	
Therefore, for $p(z)\in\mathcal{H}_1,$ if \[1+\beta \frac{zp'(z)}{p(z)}\prec\sqrt{1+z} \ (\beta\geq\beta_0),  \] we have \[ p(z)\prec\sqrt{1+z}.  \qedhere\]
\end{proof}
As in \cite{MR2917253}, Theorem 2.4, we get the following.
\begin{theorem}
	Let $\beta_0=4(\sqrt{2}-1)\approx1.65$ and $f\in\mathcal{A}.$
	\begin{enumerate}
		\item If $f$ satisfies the subordination \[ 1+\beta\left(1+\frac{zf''(z)}{f'(z)}-\frac{zf'(z)}{f(z)}\right)\prec \sqrt{1+z}\ \ (\beta\geq\beta_0),  \]then $f\in\mathcal{SL}.$
		\item If $1+\beta zf''(z)/f'(z)\prec\sqrt{1+z}\ (\beta\geq\beta_0),$ then $f'(z)\prec\sqrt{1+z}.$
		\item If $f$ satisfies the subordination \[ 1+\beta\left(\frac{(zf(z))''}{f'(z)}-\frac{2zf'(z)}{f(z)}\right)\prec\sqrt{1+z}\ \ (\beta\geq\beta_0), \] then $z^2f'(z)/f^2(z)\prec\sqrt{1+z}.$
	\end{enumerate}
\end{theorem}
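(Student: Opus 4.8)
The plan is to obtain all three assertions as immediate consequences of Lemma~\ref{lem-1-1}, by selecting in each case an auxiliary analytic function $p$ with $p(0)=1$ whose logarithmic derivative reproduces the bracketed quantity appearing in the hypothesis; then the subordination assumed on $f$ becomes exactly $1+\beta\,zp'(z)/p(z)\prec\sqrt{1+z}$, so that Lemma~\ref{lem-1-1} delivers $p(z)\prec\sqrt{1+z}$, which in each case is the desired conclusion.

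For part~(1) I would put $p(z)=zf'(z)/f(z)$, which satisfies $p(0)=1$ because $f\in\mathcal{A}$. Logarithmic differentiation gives $zp'(z)/p(z)=1+zf''(z)/f'(z)-zf'(z)/f(z)$, so the hypothesis reads precisely $1+\beta\,zp'(z)/p(z)\prec\sqrt{1+z}$ and Lemma~\ref{lem-1-1} yields $p(z)\prec\sqrt{1+z}$, i.e. $f\in\mathcal{SL}$. For part~(2) I would take $p(z)=f'(z)$, so $p(0)=f'(0)=1$ and $zp'(z)/p(z)=zf''(z)/f'(z)$, giving the conclusion $f'(z)\prec\sqrt{1+z}$ in the same way. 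For part~(3) I would take $p(z)=z^2f'(z)/f^2(z)$; here $p(0)=1$ because the normalization $f\in\mathcal{A}$ forces the double zeros of $z^2f'(z)$ and of $f^2(z)$ at the origin to cancel. From $\log p(z)=2\log z+\log f'(z)-2\log f(z)$ one gets $zp'(z)/p(z)=2+zf''(z)/f'(z)-2zf'(z)/f(z)$, and since $(zf(z))''=2f'(z)+zf''(z)$ this equals $(zf(z))''/f'(z)-2zf'(z)/f(z)$; hence the hypothesis is again $1+\beta\,zp'(z)/p(z)\prec\sqrt{1+z}$, and Lemma~\ref{lem-1-1} gives $z^2f'(z)/f^2(z)\prec\sqrt{1+z}$.

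No genuine difficulty arises: the whole theorem is a transfer of Lemma~\ref{lem-1-1} through three chain-rule identities. The one step deserving explicit justification is the verification that $p(0)=1$ in part~(3), where one must use $f(z)=z+a_2z^2+\cdots$ to see that $z^2f'(z)/f^2(z)\to 1$ as $z\to 0$; the remaining computations are routine bookkeeping with logarithmic derivatives.
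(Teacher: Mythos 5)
Your proposal is correct and is exactly the intended argument: the paper gives no written proof, simply invoking the analogous derivation in the cited reference, and the intended route is precisely your substitutions $p(z)=zf'(z)/f(z)$, $p(z)=f'(z)$, and $p(z)=z^2f'(z)/f^2(z)$ into Lemma~\ref{lem-1-1}. Your chain-rule identities and the check that $p(0)=1$ in part~(3) are all accurate.
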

\begin{lemma}\label{lem-1-2}
	Let $p$ be analytic function on $\mathbb{D}$ and $p(0)=1.$ Let $\beta_0=4\sqrt{2}(\sqrt{2}-1)\approx2.34.$ If \[ 1+\beta \frac{zp'(z)}{p^2(z)}\prec\sqrt{1+z} \ (\beta\geq\beta_0), \] then \[ p(z)\prec \sqrt{1+z}.  \]
\end{lemma}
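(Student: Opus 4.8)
The plan is to mimic the proofs of Lemmas~\ref{lem-1-0} and~\ref{lem-1-1}: take $\Omega=\Delta=\{w:|w^2-1|<1,\operatorname{Re}w>0\}$, which is precisely $\{\sqrt{1+z}:z\in\mathbb{D}\}$, and define $\psi:(\mathbb{C}\setminus\{0\})\times\mathbb{C}\times\mathbb{D}\to\mathbb{C}$ by $\psi(a,b;z)=1+\beta b/a^2$. By the admissibility criterion underlying Theorem~\ref{lem thm} it suffices to show $\psi\in\Psi[\mathcal{L}]$, i.e.\ that $\psi(r,s;z)\notin\Delta$ whenever $r=\sqrt{2\cos2\theta}\,e^{i\theta}$, $s=me^{3i\theta}/(2\sqrt{2\cos2\theta})$, $\theta\in(-\pi/4,\pi/4)$ and $m\geq1$; the conclusion $p(z)\prec\sqrt{1+z}$ then follows at once.

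First I would substitute the boundary data. Since $r^2=2\cos2\theta\,e^{2i\theta}$, one computes
\[\psi(r,s;z)=1+\frac{\beta m\,e^{i\theta}}{4\sqrt{2}\,(\cos2\theta)^{3/2}}=:1+Ke^{i\theta},\qquad K=\frac{\beta m}{4\sqrt{2}\,(\cos2\theta)^{3/2}},\]
and then, using $w^2-1=Ke^{i\theta}(2+Ke^{i\theta})$,
\[|\psi(r,s;z)^2-1|^2=K^2(4+4K\cos\theta+K^2)=\frac{\beta^2m^2}{8}\sec^3 2\theta+\frac{\beta^3m^3}{32\sqrt{2}}\cos\theta\,\sec^{9/2}2\theta+\frac{\beta^4m^4}{1024}\sec^6 2\theta=:g(\theta).\]
The goal is thus to prove $g(\theta)\geq1$ on $(-\pi/4,\pi/4)$; this yields $|\psi(r,s;z)^2-1|\geq1$, so $\psi(r,s;z)\notin\Delta$ (the condition $\operatorname{Re}w>0$ in $\Delta$ playing no further role).

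Next I would establish that $\theta=0$ minimizes $g$. The function $g$ is even, so $g'(0)=0$, and each of the factors $\sec^3 2\theta$, $\sec^6 2\theta$ is $\geq1$ on $(-\pi/4,\pi/4)$ with equality only at $\theta=0$; for the middle factor, setting $x=\sin^2\theta\in[0,1/2)$ it equals $(1-x)^{1/2}(1-2x)^{-9/2}$, whose logarithmic derivative $9/(1-2x)-1/(2(1-x))$ is positive on $[0,1/2)$, so it too is $\geq1$ with equality only at $\theta=0$ (alternatively, the second derivative test at $\theta=0$). Hence
\[\min_{\theta\in(-\pi/4,\pi/4)}g(\theta)=g(0)=\frac{\beta^2m^2}{8}+\frac{\beta^3m^3}{32\sqrt{2}}+\frac{\beta^4m^4}{1024}\geq\frac{\beta^2}{8}+\frac{\beta^3}{32\sqrt{2}}+\frac{\beta^4}{1024}\]
by $m\geq1$. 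Multiplying the last expression by $1024$, it is $\geq1$ exactly when $\beta^4+16\sqrt{2}\,\beta^3+128\beta^2-1024\geq0$, and this polynomial factors as $(\beta+4\sqrt{2})^2(\beta+4\sqrt{2}-8)(\beta+4\sqrt{2}+8)$. Since the first and third factors are positive for $\beta>0$, the inequality holds iff $\beta\geq8-4\sqrt{2}=4\sqrt{2}(\sqrt{2}-1)=\beta_0$. Therefore $\psi\in\Psi[\mathcal{L}]$ and the lemma follows.

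The only genuinely delicate point is the assertion that $\theta=0$ is a \emph{global} minimum of $g$ on the open interval, not merely a local one; the factorwise argument above settles this because each of the three summands is separately minimized at $\theta=0$, and in any case $g\to+\infty$ as $\theta\to\pm\pi/4$ (every $\sec$-power blows up while $\cos\theta\geq1/\sqrt{2}$), so no competing minimum can appear near the endpoints. Everything else is the routine algebra and the factorization displayed above, in the style of Lemmas~\ref{lem-1-0} and~\ref{lem-1-1}.
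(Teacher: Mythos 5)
Your proposal is correct and follows essentially the same route as the paper: the same admissible function $\psi(a,b;z)=1+\beta b/a^2$, the same reduction to $|\psi(r,s;z)^2-1|^2\geq 1$ on the boundary data, the same termwise minimization at $\theta=0$ (your expression $\frac{\beta^3m^3}{32\sqrt{2}}\cos\theta\sec^{9/2}2\theta$ agrees with the paper's $\frac{\beta^3m^3}{64}\sec^4 2\theta\sqrt{\sec 2\theta+1}$), and the same quartic factorization yielding $\beta_0=8-4\sqrt{2}=4\sqrt{2}(\sqrt{2}-1)$. Your explicit justification that each summand is separately minimized at $\theta=0$ is a slightly more careful version of the paper's direct bound $\sec^k 2\theta\geq 1$, $\sqrt{\sec 2\theta+1}\geq\sqrt{2}$, but the argument is the same.
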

\begin{proof}
	Let $\beta>0.$ Let $\Delta=\{w:|w^2-1|<1\,, \operatorname{Re} w >0\}.$ Let $\psi:(\mathbb{C}\setminus\{0\})\times \mathbb{C}\times \mathbb{D} \to \mathbb{C}$ be defined by $\psi(a,b;z)=1+\beta b/a^2.$
	For $\psi$ to be in $\Psi[\mathcal{L}],$ we must have $\psi(r,s;z)\not\in\Delta$ for
	$z\in\mathbb{D}.$ Then, $\psi(r,s;z)$ is given by
	\begin{align*}
\psi(r,s;z)&=1+\beta\frac{me^{i\theta}}{4\sqrt{2}\cos^{3/2}2\theta}
		\intertext{so that}
	|\psi(r,s,t;z)^2-1|^2
		&=\frac{\beta^4m^4}{1024}\sec^6 2\theta+\frac{\beta^2m^2}{8}\sec^3 2\theta+\frac{\beta^3m^3}{64}\sec^4{2\theta}\sqrt{\sec2\theta+1}\\
		&\geq\frac{\beta^4m^4}{1024}+\frac{\beta^2m^2}{8}+\frac{\beta^3m^3}{32\sqrt{2}}\geq\frac{\beta^4}{1024}+\frac{\beta^2}{8}+\frac{\beta^3}{32\sqrt{2}}.
		\end{align*}
		The last term is greater than or equal to 1 if
		\begin{align*}
		&(\beta+4\sqrt{2})^2(\beta-4\sqrt{2}(\sqrt{2}-1))(\beta+4\sqrt{2}(\sqrt{2}+1))\geq 0
		\intertext{equivalently}
		&\beta \geq 4\sqrt{2}(\sqrt{2}-1)=\beta_0.
		\end{align*}
	Thus, for $\beta\geq \beta_0,$ we have $\psi\in\Psi[\mathcal{L}].$
	Therefore, for $p(z)\in\mathcal{H}_1,$ if \[1+\beta \frac{zp'(z)}{p^2(z)}\prec\sqrt{1+z} \ (\beta\geq\beta_0),  \] we have \[ p(z)\prec\sqrt{1+z}.  \qedhere\]
\end{proof}
By taking $p(z)=\dfrac{zf'(z)}{f(z)}$ as in \cite{MR2917253}, we obtain the following.
\begin{theorem}
	Let $\beta_0=4\sqrt{2}(\sqrt{2}-1)\approx2.34$ and $f\in\mathcal{A}.$ If $f$ satisfies the subordination \[ 1-\beta+\beta\left(\frac{1+zf''(z)/f'(z)}{zf'(z)/f(z)}\right)\prec\sqrt{1+z}\ (\beta\geq\beta_0), \] then $f\in\mathcal{SL}.$
\end{theorem}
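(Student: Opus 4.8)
The plan is to obtain this statement as an immediate corollary of Lemma~\ref{lem-1-2}, via the substitution $p(z)=zf'(z)/f(z)$, exactly as in the analogous deductions in \cite{MR2917253}. Since $f\in\mathcal{A}$, the function $p$ given by $p(z)=zf'(z)/f(z)$ is analytic in $\mathbb{D}$ with $p(0)=1$, and, by definition of the class, $f\in\mathcal{SL}$ precisely when $p(z)\prec\sqrt{1+z}$. So it suffices to check that the subordinand appearing in the hypothesis is nothing but $1+\beta\,zp'(z)/p^{2}(z)$, and then to quote Lemma~\ref{lem-1-2}.

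First I would take logarithmic derivatives. Writing $\log p(z)=\log z+\log f'(z)-\log f(z)$ and differentiating gives
\[ \frac{zp'(z)}{p(z)}=1+\frac{zf''(z)}{f'(z)}-\frac{zf'(z)}{f(z)}=1+\frac{zf''(z)}{f'(z)}-p(z). \]
Dividing once more by $p(z)$ and rearranging the term $p(z)/p(z)=1$,
\[ \frac{zp'(z)}{p^{2}(z)}=\frac{1}{p(z)}\left(1+\frac{zf''(z)}{f'(z)}\right)-1=\frac{1+zf''(z)/f'(z)}{zf'(z)/f(z)}-1, \]
whence
\[ 1+\beta\,\frac{zp'(z)}{p^{2}(z)}=1-\beta+\beta\left(\frac{1+zf''(z)/f'(z)}{zf'(z)/f(z)}\right). \]
Thus the hypothesis of the theorem is literally the statement $1+\beta\,zp'(z)/p^{2}(z)\prec\sqrt{1+z}$ with $\beta\geq\beta_{0}=4\sqrt2(\sqrt2-1)$.

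Finally I would apply Lemma~\ref{lem-1-2} to this $p$: it yields $p(z)\prec\sqrt{1+z}$, i.e.\ $zf'(z)/f(z)\prec\sqrt{1+z}$, which is precisely $f\in\mathcal{SL}$. There is no substantive obstacle here; the only points needing a moment's care are the bookkeeping in the algebraic identity above and the standing convention (as in the Remark following the first-order lemmas) that $zf'(z)/f(z)$ is treated as analytic on $\mathbb{D}$, which is consistent with the hypothesised subordination forcing the quotient $(1+zf''/f')/(zf'/f)$ to be an analytic function on $\mathbb{D}$.
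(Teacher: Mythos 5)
Your proposal is correct and is exactly the paper's route: the paper obtains this theorem from Lemma~\ref{lem-1-2} by the single line ``taking $p(z)=zf'(z)/f(z)$,'' and your logarithmic-derivative computation showing $1+\beta\,zp'(z)/p^{2}(z)=1-\beta+\beta\bigl(1+zf''(z)/f'(z)\bigr)/\bigl(zf'(z)/f(z)\bigr)$ is precisely the omitted bookkeeping. No gaps.
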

Kumar \emph{et al.} introduced that for every $\beta>0,\ p(z)\prec\sqrt{1+z}$ whenever $p(z)+\beta zp'(z)/p^n(z)\prec\sqrt{1+z}\ (n=0,1,2).$ Using admissibility conditions \eqref{adm for q}, alternate proofs to the mentioned results are discussed below.
\begin{lemma}\label{lem-4-0}
	Let $\beta>0$ and $p$ be analytic in $\mathbb{D}$ and $p(0)=1$ such that \[p(z)+\beta zp'(z)\prec\sqrt{1+z}, \] then \[p(z)\prec\sqrt{1+z}.  \]
\end{lemma}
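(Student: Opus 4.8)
The plan is to apply Theorem \ref{lem thm} (equivalently, the admissibility machinery with $q(z)=\sqrt{1+z}$) in exactly the same way as in the preceding lemmas. Define $\Omega=\Delta=\{w:|w^2-1|<1,\ \operatorname{Re}w>0\}$ and let $\psi:\mathbb{C}^2\times\mathbb{D}\to\mathbb{C}$ be given by $\psi(a,b;z)=a+\beta b$. With $p\in\mathcal{H}_1$ satisfying $p(z)+\beta zp'(z)\prec\sqrt{1+z}$, we have $\psi(p(z),zp'(z);z)\in\Delta$, so it suffices to show $\psi\in\Psi[\mathcal{L}]$, i.e.\ that $\psi(r,s;z)\notin\Delta$ whenever $r=\sqrt{2\cos2\theta}\,e^{i\theta}$ and $s=me^{3i\theta}/(2\sqrt{2\cos2\theta})$ with $m\geq1$ and $-\pi/4<\theta<\pi/4$. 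Substituting, $\psi(r,s;z)=\sqrt{2\cos2\theta}\,e^{i\theta}+\beta m e^{3i\theta}/(2\sqrt{2\cos2\theta})$.

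First I would compute $|\psi(r,s;z)^2-1|^2$ as an explicit function $g(\theta)$ of $\theta$ (with $\beta m$ as a parameter), using $r^2=2\cos2\theta\,e^{2i\theta}$ so that $r^2-1$ has a clean closed form, and then expanding $\psi^2-1=(r^2-1)+\beta m e^{4i\theta}+\beta^2 m^2 e^{6i\theta}/(8\cos^2 2\theta)$ wait — more carefully, $\psi^2 = r^2 + 2r\cdot\beta s/(2\cdot)$; I would just carry out the squaring directly and collect terms into powers of $\sec2\theta$ and trigonometric factors, as was done in Lemma \ref{extended version} and Lemma \ref{lem-1-0}. The outcome will be of the shape $g(\theta)=1+(\text{positive combination of }\sec\text{-powers and }\cos\text{ terms in }\beta m)$, symmetric in $\theta$. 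Next I would verify, via the second derivative test at $\theta=0$, that $g$ attains its minimum at $\theta=0$ for all $\beta m>0$ (unlike the $p^n$ cases with $n\geq2$, here no positive threshold $\beta_0$ is needed, which matches the statement that the conclusion holds for every $\beta>0$). Finally I would check that $\min g(\theta)=g(0)\geq1$: since $g(0)-1$ is a sum of strictly positive terms in $\beta m\geq\beta>0$, this is immediate, and hence $\psi(r,s;z)$ has $|\psi(r,s;z)^2-1|\geq1$, so $\psi(r,s;z)\notin\Delta$. Therefore $\psi\in\Psi[\mathcal{L}]$ and Theorem \ref{lem thm} gives $p(z)\prec\sqrt{1+z}$.

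The only genuinely delicate point is the second derivative test: one must confirm that $g''(0)\geq0$ for the full range $\beta m\in(0,\infty)$ so that $\theta=0$ is really the global minimum on $(-\pi/4,\pi/4)$, rather than merely a critical point; since $g(\theta)\to\infty$ as $\theta\to\pm\pi/4$ (the $\sec2\theta$ factors blow up) and $g$ is even and smooth, once $\theta=0$ is a local minimum and there are no other interior critical points the claim follows. The bookkeeping in expanding $g(\theta)$ is the bulk of the work but is routine; everything else is a verbatim instance of the template already used repeatedly above, with the simplification that the threshold $\beta_0$ collapses to $0$.
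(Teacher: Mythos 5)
Your plan is correct and is exactly the paper's argument: take $\psi(a,b;z)=a+\beta b$, substitute $r=\sqrt{2\cos2\theta}\,e^{i\theta}$ and $s=me^{3i\theta}/(2\sqrt{2\cos2\theta})$, and show $|\psi(r,s;z)^2-1|\geq 1$ so that $\psi\in\Psi[\mathcal{L}]$. The one point you flag as delicate disappears once the expansion is carried out: since $r^2-1=e^{4i\theta}$ and $rs=\tfrac{m}{2}e^{4i\theta}$, one finds $|\psi(r,s;z)^2-1|^2=1+2\beta m+\tfrac{5}{4}\beta^2m^2+\tfrac{1}{4}\beta^3m^3+\tfrac{\beta^4m^4}{64}\sec^2 2\theta$, whose only $\theta$-dependence is the single term $\sec^2 2\theta\geq 1$, so no second-derivative test or critical-point analysis is needed and the bound $>1$ is immediate for every $\beta m>0$.
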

\begin{proof}
	Let $\beta>0.$ Let $\Delta=\{w:|w^2-1|<1\,, \operatorname{Re} w >0\}.$ Let $\psi:\mathbb{C}^2\times \mathbb{D} \to \mathbb{C}$ be defined by $\psi(a,b;z)=a+\beta b.$
	For $\psi$ to be in $\Psi[\mathcal{L}],$ we must have $\psi(r,s;z)\not\in\Delta$ for
	$z\in\mathbb{D}.$ Then, $\psi(r,s;z)$ is given by
	\begin{align*}
		\psi(r,s;z)&=\sqrt{2\cos2\theta}e^{i\theta}+\beta\frac{me^{3i\theta}}{2\sqrt{2\cos2\theta}}
		\intertext{so that}
		|\psi(r,s;z)^2-1|^2&=1+2\beta m+\frac{5\beta^2 m^2}{4}+\frac{\beta^3 m^3}{4}+\frac{\beta^4 m^4}{64}\sec^2 2\theta\\
		&\geq 1+2\beta m+\frac{5\beta^2 m^2}{4}+\frac{\beta^3 m^3}{4}+\frac{\beta^4 m^4}{64}\\
		&\geq 1+2\beta+\frac{5\beta^2}{4}+\frac{\beta^3}{4}+\frac{\beta^4}{64}>1.
		\end{align*}
	Thus $\psi\in\Psi[\mathcal{L}].$ Therefore, for $p(z)\in\mathcal{H}_1,$ if \[p(z)+\beta zp'(z)\prec\sqrt{1+z} \ (\beta>0),  \] we have \[ p(z)\prec\sqrt{1+z}.  \qedhere\]
\end{proof}
Taking $p(z)=zf'(z)/f(z)$ and $p(z)=f'(z),$ we get the following.
\begin{theorem}
	Let $\beta>0$ and $f$ be a function in $\mathcal{A}.$
	\begin{enumerate}
		\item If $f$ satisfies the subordination \[\frac{zf'(z)}{f(z)}+\beta\frac{zf'(z)}{f(z)}\left(1+\frac{zf''(z)}{f'(z)}-\frac{zf'(z)}{f(z)}\right)\prec \sqrt{1+z}, \] then $f\in\mathcal{SL}.$
		\item If $f'(z)+\beta zf''(z)\prec\sqrt{1+z},$ then $f'(z)\prec\sqrt{1+z}.$
	\end{enumerate}
\end{theorem}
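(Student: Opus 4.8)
The plan is to obtain both assertions directly from Lemma~\ref{lem-4-0} by an appropriate choice of the auxiliary function $p$. For part~(1) I would set $p(z)=zf'(z)/f(z)$. Since $f\in\mathcal{A}$, the function $f(z)/z$ is analytic and non-vanishing at the origin, so $p$ is analytic near $0$ with $p(0)=1$; moreover the subordination hypothesis presupposes that the displayed combination is analytic on $\mathbb{D}$, whence $p\in\mathcal{H}_1$. The only computation required is the logarithmic-derivative identity: from $\log p(z)=\log z+\log f'(z)-\log f(z)$ one obtains $zp'(z)/p(z)=1+zf''(z)/f'(z)-zf'(z)/f(z)$, that is,
\[
zp'(z)=\frac{zf'(z)}{f(z)}\left(1+\frac{zf''(z)}{f'(z)}-\frac{zf'(z)}{f(z)}\right).
\]
Consequently $p(z)+\beta zp'(z)$ is exactly the left-hand side of the subordination assumed in~(1), and Lemma~\ref{lem-4-0} gives $p(z)\prec\sqrt{1+z}$, i.e. $zf'(z)/f(z)\prec\sqrt{1+z}$, which is precisely the statement $f\in\mathcal{SL}$.

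For part~(2) I would instead take $p(z)=f'(z)$, which belongs to $\mathcal{H}_1$ since $f$ is analytic on $\mathbb{D}$ with $f'(0)=1$. Then $zp'(z)=zf''(z)$, so the hypothesis $f'(z)+\beta zf''(z)\prec\sqrt{1+z}$ is literally $p(z)+\beta zp'(z)\prec\sqrt{1+z}$; applying Lemma~\ref{lem-4-0} yields $p(z)\prec\sqrt{1+z}$, i.e. $f'(z)\prec\sqrt{1+z}$.

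I do not expect any genuine obstacle here: once Lemma~\ref{lem-4-0} is available, both parts amount to the substitution above combined with the elementary identity for the logarithmic derivative of $zf'/f$. The one point deserving a word of care is the analyticity of $p(z)=zf'(z)/f(z)$ on all of $\mathbb{D}$ in part~(1); I would simply remark that this is implicit in the assumed subordination, since a subordination to $\sqrt{1+z}$ already requires its left-hand side to be analytic on $\mathbb{D}$.
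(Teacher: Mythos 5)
Your proposal is correct and is exactly the paper's argument: the paper derives this theorem from Lemma~\ref{lem-4-0} by the same substitutions $p(z)=zf'(z)/f(z)$ and $p(z)=f'(z)$, using the identity $zp'(z)/p(z)=1+zf''(z)/f'(z)-zf'(z)/f(z)$. Your added remark about the analyticity of $p$ being implicit in the assumed subordination is a reasonable point of care that the paper leaves unstated.
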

\begin{lemma}\label{lem-4-1}
	Let $\beta>0$ and $p$ be analytic in $\mathbb{D}$ and $p(0)=1$ such that \[p(z)+\frac{\beta zp'(z)}{p(z)}\prec\sqrt{1+z}, \] then \[p(z)\prec\sqrt{1+z}.  \]
\end{lemma}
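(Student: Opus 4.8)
\textbf{Proof proposal for Lemma \ref{lem-4-1}.}

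The plan is to apply Theorem \ref{lem thm} (the particular case of the admissibility machinery tuned to $q(z)=\sqrt{1+z}$) exactly as in the preceding lemmas, using the admissibility data \eqref{adm for q}. First I would set $\Delta=\{w:|w^2-1|<1,\operatorname{Re}w>0\}$ and define $\psi:(\mathbb{C}\setminus\{0\})\times\mathbb{C}\times\mathbb{D}\to\mathbb{C}$ by $\psi(a,b;z)=a+\beta b/a$, so that $\psi(p(z),zp'(z);z)=p(z)+\beta zp'(z)/p(z)$. To conclude $p(z)\prec\sqrt{1+z}$ it suffices to show $\psi\in\Psi[\mathcal{L}]$, i.e. that $\psi(r,s;z)\notin\Delta$ whenever $r=\sqrt{2\cos2\theta}\,e^{i\theta}$ and $s=me^{3i\theta}/(2\sqrt{2\cos2\theta})$ with $m\geq1$ and $\theta\in(-\pi/4,\pi/4)$.

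Next I would compute $\psi(r,s;z)$ explicitly. Since $s/r = me^{2i\theta}/(4\cos2\theta)$, we get
\[
\psi(r,s;z)=\sqrt{2\cos2\theta}\,e^{i\theta}+\frac{\beta m e^{2i\theta}}{4\cos2\theta}.
\]
The key step is then to show $|\psi(r,s;z)^2-1|^2\geq 1$ for all admissible $\theta$ and $m\geq1$; this forces $\psi(r,s;z)\notin q(\mathbb{D})\supset\Delta$. Expanding $\psi(r,s;z)^2$ and subtracting $1$, one obtains a real expression $g(\theta)$ that is even in $\theta$ (this evenness should be checked, as in Lemma \ref{extended version} and Lemma \ref{lem-1-0}); the expectation, guided by the pattern of Lemmas \ref{lem-1-0}--\ref{lem-1-2}, is that $g$ is a sum of powers of $\sec2\theta$ with nonnegative coefficients (after grouping the $\cos3\theta$-type cross term appropriately), so that each term is minimized at $\theta=0$, giving $\min g(\theta)=g(0)$. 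At $\theta=0$ one has $r=\sqrt2$, $s=m/4$, $\psi=\sqrt2+\beta m/4$, hence $\psi^2-1 = 1+\beta m/\sqrt2+\beta^2m^2/16$, and $g(0)=(1+\beta m/\sqrt2+\beta^2m^2/16)^2\geq(1+\beta/\sqrt2+\beta^2/16)^2>1$ for every $\beta>0$ and $m\geq1$. Unlike Lemmas \ref{lem-1-0}--\ref{lem-1-2}, no threshold $\beta_0$ appears because the constant term $1$ inside the square already exceeds what is needed.

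The main obstacle I anticipate is verifying that the minimum of $g(\theta)$ genuinely occurs at $\theta=0$ for \emph{all} $\beta>0$ and $m\geq1$, i.e. carrying out the second-derivative test (or a direct term-by-term monotonicity argument in $\sec2\theta\geq1$) cleanly; the cross term coming from the product of $\sqrt{2\cos2\theta}\,e^{i\theta}$ and the $e^{2i\theta}/\cos2\theta$ piece produces a $\cos3\theta$ factor whose sign and growth need to be controlled, and one must confirm it does not spoil the monotonicity. Once that is settled, the conclusion $\psi\in\Psi[\mathcal{L}]$ is immediate, and Theorem \ref{lem thm} yields $p(z)\prec\sqrt{1+z}$, completing the proof.
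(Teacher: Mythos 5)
Your proposal is correct and follows the paper's proof essentially verbatim: the same admissible function $\psi(a,b;z)=a+\beta b/a$, the same reduction to showing $|\psi(r,s;z)^2-1|^2\geq 1$, and the term-by-term minimization at $\theta=0$ that you anticipate is exactly what the paper carries out (the cross term you worry about collapses to $\beta m\bigl(1+\tfrac{\beta^2m^2}{16}\sec^2 2\theta\bigr)\sqrt{\sec 2\theta+1}$, which is nonnegative and increasing in $|\theta|$, so no second-derivative test is needed). Your value $g(0)=\bigl(1+\beta m/\sqrt{2}+\beta^2m^2/16\bigr)^2=1+\sqrt{2}\,\beta m+\tfrac{5}{8}\beta^2m^2+\tfrac{1}{8\sqrt{2}}\beta^3m^3+\tfrac{1}{256}\beta^4m^4$ agrees exactly with the paper's stated minimum.
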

\begin{proof}
	Let $\beta>0.$ Let $\Delta=\{w:|w^2-1|<1\,, \operatorname{Re} w >0\}.$ Let $\psi:(\mathbb{C}\setminus\{0\})\times\mathbb{C}\times \mathbb{D} \to \mathbb{C}$ be defined by $\psi(a,b;z)=a+\beta b/a.$
	For $\psi$ to be in $\Psi[\mathcal{L}],$ we must have $\psi(r,s;z)\not\in\Delta$ for
	$z\in\mathbb{D}.$ Then, $\psi(r,s;z)$ is given by
	\begin{align*}
		\psi(r,s;z)&=\sqrt{2\cos2\theta}e^{i\theta}+\beta\frac{me^{2i\theta}}{4\cos2\theta}
		\intertext{so that}
		|\psi(r,s,t;z)^2-1|^2
		&=1+\frac{\beta^4m^4}{256}\sec^4 2\theta+\frac{\beta^2m^2}{8}\sec^2 2\theta+\frac{\beta^2m^2}{2}\sec2\theta\\
		&\quad{}+\beta m\sqrt{\sec2\theta+1}+\frac{\beta^3m^3}{16}\sqrt{\sec2\theta+1}\sec^2\theta\\
		&\geq 1+\sqrt{2}\beta m+\frac{5\beta^2m^2}{8}+\frac{\beta^3m^3}{8\sqrt{2}}+\frac{\beta^4m^4}{256}\\
		&\geq 1+\sqrt{2}\beta+\frac{5\beta^2}{8} +\frac{\beta^3}{8\sqrt{2}}+\frac{\beta^4}{256}> 1.
	\end{align*}
	Thus, $\psi\in\Psi[\mathcal{L}].$ Therefore, for $p(z)\in\mathcal{H}_1,$ if \[p(z)+\beta \frac{zp'(z)}{p(z)}\prec\sqrt{1+z} \ (\beta>0),  \] we have \[ p(z)\prec\sqrt{1+z}.  \qedhere\]
\end{proof}
For $p(z)=zf'(z)/f(z)$ and $p(z)=z^2f'(z)/f^2(z),$ we have
\begin{theorem}
	Let $\beta>0$ and $f$ be a function in $\mathcal{A}.$
	\begin{enumerate}
		\item If $f$ satisfies the subordination \[\frac{zf'(z)}{f(z)}+\beta\left(1+\frac{zf''(z)}{f'(z)}-\frac{zf'(z)}{f(z)}\right)\prec \sqrt{1+z}, \] then $f\in\mathcal{SL}.$
		\item If $f$ satisfies the subordination \[ \frac{z^2f'(z)}{f^2(z)}+\beta\left(\frac{(zf(z))''}{f'(z)}-\frac{2zf'(z)}{f(z)}\right)\prec\sqrt{1+z}, \] then $z^2f'(z)/f^2(z)\prec\sqrt{1+z}.$
	\end{enumerate}
\end{theorem}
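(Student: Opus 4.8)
The plan is to deduce both assertions directly from Lemma \ref{lem-4-1} by choosing a suitable auxiliary function $p$ and rewriting each hypothesis via logarithmic differentiation. For part (1), I would set $p(z)=zf'(z)/f(z)$; since $f\in\mathcal{A}$ this is analytic near the origin with $p(0)=1$, and analytic on $\mathbb{D}$ under the stated subordination. Logarithmic differentiation gives $p'(z)/p(z)=1/z+f''(z)/f'(z)-f'(z)/f(z)$, hence
\[
\frac{zp'(z)}{p(z)}=1+\frac{zf''(z)}{f'(z)}-\frac{zf'(z)}{f(z)},
\]
so the hypothesis in (1) reads exactly $p(z)+\beta zp'(z)/p(z)\prec\sqrt{1+z}$. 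Lemma \ref{lem-4-1} then yields $p(z)=zf'(z)/f(z)\prec\sqrt{1+z}$, i.e. $f\in\mathcal{SL}$.

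For part (2), I would instead take $p(z)=z^2f'(z)/f^2(z)$, again analytic near $0$ with $p(0)=1$. Logarithmic differentiation now gives
\[
\frac{zp'(z)}{p(z)}=2+\frac{zf''(z)}{f'(z)}-\frac{2zf'(z)}{f(z)}.
\]
Since $(zf(z))''=2f'(z)+zf''(z)$, we get
\[
\frac{(zf(z))''}{f'(z)}-\frac{2zf'(z)}{f(z)}=2+\frac{zf''(z)}{f'(z)}-\frac{2zf'(z)}{f(z)}=\frac{zp'(z)}{p(z)},
\]
so the hypothesis in (2) is again $p(z)+\beta zp'(z)/p(z)\prec\sqrt{1+z}$, and Lemma \ref{lem-4-1} gives $z^2f'(z)/f^2(z)\prec\sqrt{1+z}$.

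The entire substance of the statement sits in Lemma \ref{lem-4-1}; what remains is purely a change of variable. The only points to verify are the two logarithmic-derivative computations and the elementary identity $(zf(z))''=2f'(z)+zf''(z)$, together with the standing assumption that the various quotients are analytic on $\mathbb{D}$ (automatic near the origin and implicit in the subordination hypotheses). Accordingly I do not foresee any real obstacle.
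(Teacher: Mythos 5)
Your proposal is correct and matches the paper's approach exactly: the paper likewise deduces both parts from Lemma \ref{lem-4-1} by taking $p(z)=zf'(z)/f(z)$ and $p(z)=z^2f'(z)/f^2(z)$ respectively, and your logarithmic-differentiation identities (including $(zf(z))''=2f'(z)+zf''(z)$) are the intended, and correct, verifications.
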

\begin{lemma}\label{lem-4-2}
	Let $\beta>0$ and $p$ be analytic in $\mathbb{D}$ and $p(0)=1$ such that \[p(z)+\frac{\beta zp'(z)}{p^2(z)}\prec\sqrt{1+z}, \] then \[p(z)\prec\sqrt{1+z}.  \]
\end{lemma}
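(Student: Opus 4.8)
The plan is to run the admissibility argument of Theorem~\ref{lem thm} in exactly the form used for Lemmas~\ref{lem-4-0} and~\ref{lem-4-1}. First I would set $\Delta=\{w:|w^2-1|<1,\ \operatorname{Re}w>0\}$ and define $\psi:(\mathbb{C}\setminus\{0\})\times\mathbb{C}\times\mathbb{D}\to\mathbb{C}$ by $\psi(a,b;z)=a+\beta b/a^2$, so that the hypothesis reads $\psi(p(z),zp'(z);z)\prec\sqrt{1+z}$. It then suffices to check $\psi\in\Psi[\mathcal{L}]$, i.e.\ that $\psi(r,s;z)\notin\Delta$ whenever $(r,s)$ is of the admissible form \eqref{adm for q}.

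Next I would substitute $r=\sqrt{2\cos2\theta}\,e^{i\theta}$ and $s=me^{3i\theta}/(2\sqrt{2\cos2\theta})$. Since $r^2=2\cos2\theta\,e^{2i\theta}$, a short simplification gives $s/r^2=me^{i\theta}/(4\sqrt2\,\cos^{3/2}2\theta)$, so that both summands carry the same unimodular factor $e^{i\theta}$ and
\[
\psi(r,s;z)=e^{i\theta}\left(\sqrt{2\cos2\theta}+\frac{\beta m}{4\sqrt2\,\cos^{3/2}2\theta}\right)=:R(\theta)\,e^{i\theta},\qquad R(\theta)>0.
\]
Writing $c=\cos2\theta\in(0,1]$ and $\mu=\beta m\ (\ge\beta>0)$, I would then use $\psi(r,s;z)^2=R(\theta)^2e^{2i\theta}$ to get $|\psi(r,s;z)^2-1|^2=R(\theta)^4-2R(\theta)^2c+1$, and expand $R(\theta)^2=2c+\mu/(2c)+\mu^2/(32c^3)$. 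After collecting terms the $4c^2$ cancels and the identity simplifies to
\[
|\psi(r,s;z)^2-1|^2=1+\mu+\frac{5\mu^2}{16c^2}+\frac{\mu^3}{32c^4}+\frac{\mu^4}{1024c^6}=:g(\theta),
\]
which is the $n=2$ analogue of the formulas appearing in Lemmas~\ref{lem-4-0} and~\ref{lem-4-1}.

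Finally, since $0<c\le1$ every power $c^{-k}$ is $\ge1$, so for all $\theta\in(-\pi/4,\pi/4)$ and all $m\ge1$,
\[
g(\theta)\ge 1+\mu+\frac{5\mu^2}{16}+\frac{\mu^3}{32}+\frac{\mu^4}{1024}\ge 1+\beta+\frac{5\beta^2}{16}+\frac{\beta^3}{32}+\frac{\beta^4}{1024}>1.
\]
Hence $|\psi(r,s;z)^2-1|>1$, so $\psi(r,s;z)\notin\Delta$, i.e.\ $\psi\in\Psi[\mathcal{L}]$, and Theorem~\ref{lem thm} yields $p(z)\prec\sqrt{1+z}$. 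The only real work is the bookkeeping that produces the clean form of $g(\theta)$; once the negative powers of $c$ are isolated the bound is immediate, and — in contrast with Lemmas~\ref{lem-1-0}--\ref{lem-1-2} — no threshold $\beta_0$ appears and no second-derivative test is needed, since $g$ is manifestly decreasing in $c=\cos2\theta$ and therefore minimal at $\theta=0$, the conclusion holding for every $\beta>0$.
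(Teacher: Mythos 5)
Your proposal is correct and follows essentially the same admissibility argument as the paper: the identity you derive, $|\psi(r,s;z)^2-1|^2=1+\mu+\tfrac{5\mu^2}{16}\sec^2 2\theta+\tfrac{\mu^3}{32}\sec^4 2\theta+\tfrac{\mu^4}{1024}\sec^6 2\theta$ with $\mu=\beta m$, is exactly the expression the paper writes down, and the lower bound via $\sec 2\theta\ge 1$ and $m\ge 1$ is the same. Your factoring out of $e^{i\theta}$ to reduce the computation to $R^4-2R^2\cos2\theta+1$ is a tidy way to organize the bookkeeping, but it is a presentational difference, not a different proof.
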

\begin{proof}
	Let $\beta>0.$ Let $\Delta=\{w:|w^2-1|<1\,, \operatorname{Re} w >0\}.$ Let $\psi:(\mathbb{C}\setminus\{0\})\times\mathbb{C}\times \mathbb{D} \to \mathbb{C}$ be defined by $\psi(a,b;z)=a+\beta b/a^2.$
	For $\psi$ to be in $\Psi[\mathcal{L}],$ we must have $\psi(r,s;z)\not\in\Delta$ for
	$z\in\mathbb{D}.$ Then, $\psi(r,s;z)$ is given by
	\begin{align*}
		\psi(r,s;z)&=\sqrt{2\cos2\theta}e^{i\theta}+\beta\frac{me^{i\theta}}{4\sqrt{2}\cos^{3/2}2\theta},
		\intertext{so that}
		|\psi(r,s;z)^2-1|^2&=1+\beta m+\frac{5\beta^2m^2}{16}\sec^2 2\theta+\frac{\beta^3 m^3}{32}\sec^4 2\theta+\frac{\beta^4m^4}{1024}\sec^6 2\theta\\
		&\geq 1+\beta m+\frac{5\beta^2m^2}{16}+\frac{\beta^3 m^3}{32}+\frac{\beta^4m^4}{1024}\\
		&\geq 1+\beta +\frac{5\beta^2}{16}+\frac{\beta^3}{32}+\frac{\beta^4}{1024}>1.
	\end{align*}
Thus, $\psi\in\Psi[\mathcal{L}].$ Therefore, for $p(z)\in\mathcal{H}_1,$ if \[p(z)+\beta \frac{zp'(z)}{p^2(z)}\prec\sqrt{1+z} \ (\beta>0),  \] we have \[ p(z)\prec\sqrt{1+z}.  \qedhere\]
\end{proof}
Taking $p(z)=\dfrac{zf'(z)}{f(z)},$ we obtain the following.
\begin{theorem}
	Let $\beta>0$ and $f$ be a function in $\mathcal{A}.$ If $f$ satisfies the subordination \[ \frac{zf'(z)}{f(z)}-\beta+\beta\left(\frac{1+zf''(z)/f'(z)}{zf'(z)/f(z)}\right)\prec\sqrt{1+z}, \] then $f\in\mathcal{SL}.$
\end{theorem}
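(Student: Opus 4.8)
The plan is to reduce this theorem directly to Lemma~\ref{lem-4-2} via the substitution $p(z)=zf'(z)/f(z)$. Since $f\in\mathcal{A}$, the function $p$ is analytic near the origin with $p(0)=1$, and writing down the hypothesised subordination already presupposes that $zf'(z)/f(z)$ is analytic on all of $\mathbb{D}$ (equivalently that $f$ does not vanish on $\mathbb{D}\setminus\{0\}$), so $p\in\mathcal{H}_1$ and Lemma~\ref{lem-4-2} is applicable once the left-hand side of the subordination is identified with $p(z)+\beta zp'(z)/p^2(z)$.

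First I would carry out the logarithmic-derivative computation. From $\log p(z)=\log z+\log f'(z)-\log f(z)$ one obtains
\[
\frac{zp'(z)}{p(z)}=1+\frac{zf''(z)}{f'(z)}-\frac{zf'(z)}{f(z)}=1+\frac{zf''(z)}{f'(z)}-p(z).
\]
Dividing by $p(z)=zf'(z)/f(z)$ gives
\[
\frac{zp'(z)}{p^2(z)}=\frac{1+zf''(z)/f'(z)}{zf'(z)/f(z)}-1,
\]
and hence
\[
p(z)+\frac{\beta zp'(z)}{p^2(z)}=\frac{zf'(z)}{f(z)}-\beta+\beta\left(\frac{1+zf''(z)/f'(z)}{zf'(z)/f(z)}\right),
\]
which is exactly the function on the left of the hypothesis.

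It then follows that the hypothesis asserts precisely $p(z)+\beta zp'(z)/p^2(z)\prec\sqrt{1+z}$ with $\beta>0$, so Lemma~\ref{lem-4-2} yields $p(z)\prec\sqrt{1+z}$, i.e. $zf'(z)/f(z)\prec\sqrt{1+z}$, which is the definition of $f\in\mathcal{SL}$.

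I do not expect any real obstacle here: the argument is a one-line corollary once the logarithmic-derivative identity is in place, and the admissibility estimate doing the actual work has already been established in Lemma~\ref{lem-4-2}. The only point meriting a word of care is the standing convention that $zf'(z)/f(z)$ is a bona fide element of $\mathcal{H}_1$, which is implicit in the statement of the subordination and is the usual assumption throughout this circle of results.
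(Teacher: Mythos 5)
Your proposal is correct and is exactly the paper's argument: the paper deduces this theorem from Lemma~\ref{lem-4-2} by the same substitution $p(z)=zf'(z)/f(z)$, and your logarithmic-derivative identity verifying that $p(z)+\beta zp'(z)/p^2(z)$ equals the left-hand side of the hypothesis is the (unstated) computation behind the paper's one-line derivation.
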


\section*{Acknowledgements}
The first author is supported by University Grants Commission(UGC), UGC-Ref. No.:1069/(CSIR-UGC NET DEC, 2016).

\end{document}